\numberwithin{equation}{section}
\newtheorem{Theorem}{Theorem}[section]
\newtheorem{Proposition}[Theorem]{Proposition}
\newtheorem{Lemma}[Theorem]{Lemma}
\newtheorem{Example}[Theorem]{Example}
\newtheorem{Remark}[Theorem]{Remark}
\newtheorem{Definition}[Theorem]{Definition}
\def\rank{\mathrm{rank}\,}
\def\Lspan{\mathrm{span}\,}
\def\bbZ{\mathbb{Z}}
\def\bbR{\mathbb{R}}
\def\calP{\mathcal P}
\def\supp{\mathrm{supp}\ }
\def\Lspan{\mathrm{span}\ }
\def\Vm{\mathcal V_m}
\def\Wm{\mathcal W_m}
\def\tVm{\tilde{\mathcal V}_m}
\def\Vmk{\mathcal V_m|_{[t_0,t_k]}}
\def\tVmk{\tilde{\mathcal V}_m|_{[t_0,t_k]}}
\begin{document}

\title[Phaseless Sampling and Linear Reconstruction]{Phaseless Sampling and Linear Reconstruction of Functions in Spline Spaces}

\thanks{This work was partially supported by the
National Natural Science Foundation of China (11371200, 11525104 and  11531013).}

\author{Wenchang Sun}

\address{School of Mathematical Sciences and LPMC,  Nankai University,       Tianjin~300071, China}

\email{sunwch@nankai.edu.cn}

\begin{abstract}
We study phaseless sampling in spline spaces generated by B-splines with arbitrary knots.
For real spline spaces, we give a necessary and sufficient condition for a sequence of sampling points to
admit a local phase retrieval of any nonseparable function.
We also study phaseless sampling in complex spline spaces and illustrate that phase retrieval is impossible in this case.
Nevertheless, we show that  phaseless sampling is possible.
For any function $f$ in a complex spline space, no mater it is separable or not,
we show that $|f(x)|^2$ is uniquely determined and can be recovered linearly from its  sampled values at a well chosen sequence of sampling points.
We give necessary and sufficient conditions for such sequences.
\end{abstract}

\keywords{Phase retrieval; phaseless sampling; sampling sequences; linear sampling sequences; spline functions; spline spaces.
}

\subjclass[2000]{Primary 42C15, Secondary 46C05,  94A20}

\maketitle

\section{Introduction and Main Results}

In this paper, we study phaseless sampling in real and complex spline spaces.
This problem arises in the study of the classical sampling theory in shift invariant spaces
and some recent studies on  phase retrieval for general frames.

Phase retrieval arises in the study of recovering  signals from intensity measurements\cite{Shech2015}.
In the setting of frame theory, this problem was introduced by Balan, Casazza  and Edidin \cite{BaCE2006}.
It was shown that with  certain frames, we can recover a signal up to a phase from its phaseless frame coefficients.
We refer to  \cite{Bah2014,BaCE2006,Band2014a,Band2014b,Ca2015,CCPW,Ca2017,CLNZ2016,
Ed2017,Gross2017,Huang2016,Iwen2017,Iwen2016,Qian2016,Qiu2016,Tillmann2016}
for various aspects on this topic.

Recently, many works for phase retrieval in infinite-dimensional spaces have been done
\cite{Al2016,Al2016b,Cahill2016,Ma2015,Po2014a,Po2014b,Sh2016,Th2011,yang2013}.
When the intensity measurements come  from sampled values of a function, this problem is studied with the term phaseless sampling.
Chen, Cheng, Jiang, Sun and Wang  \cite{Chen2016,Cheng2017} studied phaseless sampling and reconstruction of
real-valued signals.
They show that phase retrieval is possible for nonseparable functions  in general shift-invariant spaces
whenever the sampling points are sufficiently dense.
As a consequence, for the shift invariant space
\[
  V_m = \left\{\sum c_n \varphi_m(x-n):\,     c_n    \in  \mathbb R,  n\in\mathbb Z\right\}
\]
generated by  the $m$-degree B-spline
\[
   \varphi_m = \chi_{[0,  1]}^{ } * \ldots * \chi_{[0, 1]}^{ }  \quad\quad  ( \mbox{ $m+1$ terms}),\quad  m\ge 1,
\]
it was shown in \cite{Chen2016} that any nonseparable function $f\in V_m$ is uniquely determined up to a sign
from its unsigned sampled values at a sequence $E\subset \bbR$ which satisfies $\#(E\cap (n,n+1)) \ge 2m+1$.

In \cite{Sun2017}, we gave a characterization of phaseless sampling sequences for $V_m$.
Specifically, we gave a necessary and sufficient condition on the sampling sequence $\{x_n:\,n\in\bbZ\}$
such that any nonseparable function $f\in V_m$ is uniquely determined up to a sign
from its unsigned sampled values $|f(x_n)|$, $n\in\bbZ$.

Since apparatuses can process only finitely many data, local sampling and reconstruction of signals
are practically useful. In this case, we need to recover a signal locally from finitely many samples.
For  local phaseless sampling of functions in real spline spaces, we also get
a necessary and sufficient condition for a sequence of sampling point $\{x_n:\,1\le n\le N\}$ to admit
a local phaseless recovery, i.e., to recover $f$ up to  a sign on an interval  from its unsigned samples $|f(x_n)|$, $1\le n\le N$.

 In this paper, we generalize this result to  more general settings.
In fact, we give a complete characterization of local phaseless sampling sequences for functions in real spline spaces generated by
B-splines with arbitrary knots.

Stability is required in practice.
For the stable recovery of signals from phaseless measurements with general frames,
Bodmann and Hammen \cite{Bod2016} provided a recovery algorithm with explicit error bounds
whenever the frame contains at least $6d-3$ elements, where $d$ stands for the dimension of signals.
In this paper, we show that linear and therefore stable recovery of functions in spline spaces
from phaseless sampled values is possible with much less samples.

Specifically, we study the stable recovery of functions in complex spline spaces from phaseless sampled values.
Note that we have to consider more cases for complex functions.
For example, both $f$ and its conjugate $\overline{f}$ have the same phaseless samples. Therefore we can not differ $\overline f$
from $f$ with phaseless samples.
The following is an explicit example.

\begin{Example}\label{ex:x1}\upshape
Take $m=1$ and $N\ge 1$. Set
\[
   f(x) = \sum_{n=0}^N ((-1)^n+\sqrt{-1}) \varphi_1(x-n).
\]
Then we have
\[
  |f(x)|^2 = \sum_{n=0}^N 2 |\varphi_1(x-n)|^2.
\]
Let $\{a_n:\, 0\le n\le N\}$  be a sequence of $\pm 1$, say, $\{1, -1,-1, -1,\ldots\}$. Define
$b_1 =a_1$ and $b_n = - a_na_{n-1}/b_{n-1}$ for $n\ge 2$. Then we have
$|a_n|=|b_n|=1$ and $a_{n}a_{n+1}+b_nb_{n+1}=0$ for $1\le n\le N$.
Let
\[
  g(x) = \sum_{n=0}^N (a_n + b_n\sqrt{-1})  \varphi_1(x-n).
\]
Then we have
\[
  |f(x)|^2 = |g(x)|^2, \qquad x\in\bbR.
\]
Obviously, there is no constant $c$ with $|c|=1$ such that $f = c\cdot g$ or $f = \overline{c\cdot g}$.
\end{Example}

Let $W_m$ be defined similarly as $V_m$ except that the coefficients $c_n$ are complex numbers.
The above  example shows that  phase retrieval is impossible in $W_m$,
that is, we can not determine $f$ or $\overline{f}$ up to a phase from phaseless sampled values for $f\in W_m$.
Nevertheless, there is a phaseless sampling theorem for $W_m$.
Moreover, we show that it is possible to reconstruct $|f(x)|^2$ linearly from its sampled values.
Before stating our result, we introduce some definitions.

Let $H$ be a set of functions.   Define $|H|=\{|f|:\, f\in H\}$.
Let  $|H|_{[a,b]}$ be the restriction of $|H|$ on $[a,b]$.

\begin{Definition}
Let $H$ be a set of functions.
We call $E=\{x_n:\,1\le n\le N\} \subset [a, b]$  a   \emph{phaseless sampling sequence} for $H|_{[a,b]}$ if for any nonseparable $f\in H|_{[a,b]}$,
$f$ is uniquely determined up to a phase by its phaseless sampled  values over $E$.
Recall that a function $f\in H$ is said to be separable if $f=f_1+f_2$ for some $f_1, f_2\in H\setminus\{0\}$
with $f_1(x)f_2(x) = 0$.

And we call $E$ a \emph{sampling sequence} for $H|_{[a,b]}$ if any $f\in H|_{[a,b]}$ is uniquely determined by its
 sampled  values over $E$.

Moreover, if there exists a sequence of functions $\{S_n:\,1\le n\le N\}$ such that
\[
  f(x) = \sum_{n=1}^N f(x_n) S_n(x),\qquad  \forall x\in [a,b],\,\, f\in H,
\]
then we call $E$ a \emph{linear sampling sequence}  for $H|_{[a,b]}$.
\end{Definition}

In this paper, we study local sampling in $|W_m|^2$.
Specifically, we search for conditions on the sequence of sampling points $E$ such that any function in  $|W_m|^2_{[n_1,n_2]}$ is uniquely determined
by its sampled values on $E$, where $n_1<n_2$ are integers.
Although phase retrieval is  impossible in $W_m$, we show that sampling sequences for  $|W_m|^2_{[n_1,n_2]}$  do exist.
Moreover, it is possible to reconstruct $|f(x)|^2$ linearly  from its sampled values   with well chosen sampling points.
That is, there exist linear sampling sequences for  $|W_m|^2_{[n_1,n_2]}$.
In fact, we give necessary and sufficient conditions for such sequences of points.

\begin{Theorem}\label{thm:linear}
Let $n_1<n_2$ and $N$ be integers. Let $E=\{x_n:\, 1\le n\le N\} \subset [n_1, n_2]$  be a sequence  consisting of distinct points.
Then
$E$ is a   linear sampling sequence for $|W_m|^2_{[n_1,n_2]}$  if and only if it  satisfies the followings,
\begin{align}
\# E &\ge (m+1)(n_2-n_1)+m,          \label{eqn:linear:1}               \\
\# (E\cap [n_1, i)) &\ge (m+1)(i-n_1),         \quad   n_1<  i \le  n_2,     \label{eqn:linear:2}  \\
\# (E\cap (i, n_2]) &\ge (m+1)(n_2-i),       \quad   n_1\le  i < n_2,  \label{eqn:linear:3} \\
\# (E\cap(i,j)) &\ge  (m+1)(j-i)-m,             \,\,        n_1\le i<j\le n_2,           \label{eqn:linear:4}
\end{align}
where $\# E$ denotes the cardinality of  $E$.

When the conditions are satisfied, there exist functions $S_n$, $1\le n\le N$, such that
\[
  |f(x)|^2  = \sum_{n=1}^N |f(x_n)|^2 S_n(x),\qquad \forall  x\in [N_1, N_2], \  f\in W_m.
\]
\end{Theorem}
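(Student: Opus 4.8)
The plan is to linearize the map $f\mapsto|f|^2$ and reduce the theorem to a classical spline sampling question. Writing $f=\sum_n c_n\varphi_m(\cdot-n)$ with $c_n=a_n+\sqrt{-1}\,b_n$, one has
\[
 |f(x)|^2=\sum_{n,n'}\Re(c_n\overline{c_{n'}})\,\varphi_m(x-n)\varphi_m(x-n'),
 \qquad \Re(c_n\overline{c_{n'}})=a_na_{n'}+b_nb_{n'}.
\]
Thus $|f|^2$ is a real combination of the products $\varphi_m(\cdot-n)\varphi_m(\cdot-n')$ whose coefficient array is the symmetric matrix $aa^{\mathsf T}+bb^{\mathsf T}$. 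As $f$ ranges over $W_m$ these arrays range over a set whose real linear span is all symmetric matrices (already the rank-one arrays $aa^{\mathsf T}$ span them), so
\[
 U:=\Lspan\{\,|f|^2:\ f\in W_m\,\}\big|_{[n_1,n_2]}
   =\Lspan\{\,\varphi_m(\cdot-n)\varphi_m(\cdot-n')\,\}\big|_{[n_1,n_2]}.
\]
First I would prove the soft but crucial reduction: $E$ is a linear sampling sequence for $|W_m|^2_{[n_1,n_2]}$ if and only if the point evaluations $\{\delta_{x_n}\}$ separate $U$ (equivalently, the only $u\in U$ vanishing on $E$ is $u=0$). Indeed, if a reconstruction $|f|^2=\sum_n|f(x_n)|^2S_n$ holds for every $f$, then by linearity $u=\sum_n u(x_n)S_n$ holds for every $u\in U$, since each $u$ is a finite real combination of functions $|f|^2$; this forces injectivity of evaluation on $U$, and conversely any left inverse of the injective evaluation map $U\to\bbR^N$ produces such $S_n$. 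The theorem thereby becomes a pure statement about sampling the finite-dimensional space $U$.

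The second step is to identify $U$ explicitly. Each product $\varphi_m(\cdot-n)\varphi_m(\cdot-n')$ is a piecewise polynomial of degree $\le 2m$ and, being a product of two $C^{m-1}$ functions with integer knots, lies in the spline space $\mathcal S^{m-1}_{2m}$ of degree $2m$ and smoothness $C^{m-1}$ on $n_1,\dots,n_2$; hence $U\subseteq \mathcal S^{m-1}_{2m}|_{[n_1,n_2]}$. For the reverse inclusion I would argue locally and match dimensions: on each unit interval $(k,k+1)$ the $m+1$ nonvanishing B-splines restrict to a basis of $\calP_m$, and products of a basis of $\calP_m$ span $\calP_{2m}$ (checked on monomials, the image of the multiplication map being basis-independent), so $U$ restricts onto all of $\calP_{2m}$ on every subinterval; since $\dim \mathcal S^{m-1}_{2m}|_{[n_1,n_2]}=(m+1)(n_2-n_1)+m$, this forces $U=\mathcal S^{m-1}_{2m}|_{[n_1,n_2]}$. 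This rewrites $U$ as a B-spline space whose knot sequence places the interior integers $n_1+1,\dots,n_2-1$ with multiplicity $m+1$ and the endpoints $n_1,n_2$ with multiplicity $2m+1$.

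With $U$ so identified, the final step is a Schoenberg--Whitney analysis of over-sampling. The number of B-splines of $U$ supported in a region equals the dimension of the subspace of $U$ supported there, and a direct count gives $(m+1)(j-i)-m$ for an interior window $(i,j)$ (imposing $u^{(l)}(i)=u^{(l)}(j)=0$, $0\le l\le m-1$, at each end), $(m+1)(i-n_1)$ for a left window $[n_1,i)$ and $(m+1)(n_2-i)$ for a right window $(i,n_2]$ (only one interior end contributes vanishing conditions), and $(m+1)(n_2-n_1)+m$ for the whole interval. Matching these with \eqref{eqn:linear:1}--\eqref{eqn:linear:4}, the conditions say exactly that every region contains at least as many sampling points as there are B-splines supported in it. Necessity is immediate: a deficient region admits a nontrivial combination of the B-splines supported there that vanishes at the few interior points and, being supported in the region, at all remaining points, contradicting separation. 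For sufficiency I would invoke Schoenberg--Whitney through Hall's marriage theorem: the counting inequalities provide a system of distinct representatives matching a subset of $\dim U$ points $y_j$ to B-splines with $B_j(y_j)\ne0$, so that subset is unisolvent, evaluation on $U$ is injective, and any left inverse yields the functions $S_n$.

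I expect the main obstacle to be the sufficiency direction of this last step --- converting the four interval-counting inequalities into an actual unisolvent subset (hence the explicit $S_n$) via correct bookkeeping of B-spline supports against the boundary knot multiplicities $2m+1$; the global identification $U=\mathcal S^{m-1}_{2m}$ (as opposed to the easy local spanning) is the other place where care is needed.
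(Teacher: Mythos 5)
Your overall route coincides with the paper's: reduce to real coefficients, identify $\Lspan(|W_m|^2_{[n_1,n_2]})$ with the space of degree-$2m$ splines having smoothness $C^{m-1}$ at the interior integers (equivalently, interior knot multiplicity $m+1$), and then apply a counting characterization of linear sampling sequences for that spline space. Your first reduction, the equivalence of ``linear sampling sequence'' with injectivity of the evaluation map on the finite-dimensional span, and the interpretation of (\ref{eqn:linear:1})--(\ref{eqn:linear:4}) as support counts are all sound; your last step re-derives what the paper simply imports as Proposition~\ref{prop:local sampling} from \cite{Sun09}.

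The genuine gap is in the identification of $U$ with the full spline space. You prove the inclusion $U\subseteq\mathcal S^{m-1}_{2m}|_{[n_1,n_2]}$ and that $U$ restricts onto $\calP_{2m}$ on each unit cell, and then assert that the dimension count ``forces'' equality. That inference is false: surjectivity of every local restriction map gives no lower bound on $\dim U$ beyond $2m+1$, because a proper subspace can still restrict onto $\calP_{2m}$ on every cell. For instance, with $m=1$ on $[0,2]$, the four-dimensional space spanned by $1$, $x$, $x^2$ and $((x-1)_+)^2$ restricts onto $\calP_2$ on both cells yet is a proper subspace of the five-dimensional space of $C^0$ piecewise quadratics. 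What is actually needed is the lower bound $\dim U\ge (m+1)(n_2-n_1)+m$, i.e., that many linearly independent products of B-splines; this is exactly the content of the paper's Lemma~\ref{Lem:dimension}, whose proof selects an explicit family of products and establishes independence by induction on the number of cells, the key point being that $(x-i)^m$ and $(i+1-x)^m$ have no common factor on $(i,i+1)$, so a dependence relation propagates cell by cell and annihilates all coefficients. Filling this step is the main work of the proof, so as written your argument is incomplete at its central point even though the strategy is the right one.
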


We see from the above theorem that  $(m+1)(N_2-N_1)+m$ is the minimal cardinality for a sequence to be a linear sampling sequence.
Note that the dimension of $W_m|_{[N_1,N_2]}$ is $d^{}_{N_1,N_2} = N_2-N_1+m$. Hence
\[
  (m+1)(N_2-N_1)+m = (m+1) d^{}_{N_1,N_2} - m^2,
\]
which is less than $4d^{}_{N_1,N_2} - 4$ whenever $m\le 3$. Therefore, it is impossible for phase retrieval in this case.

Recall that to get a stable recovery of signals from phaseless measurements with general frames,
we need  at least $6d-3$ frame coefficients \cite{Bod2016}.
The above theorem shows that we can get a linear recovery with $(m+1) d^{}_{N_1,N_2} - m^2$ sampled values.
Since the sample number is less than $6d^{}_{N_1,N_2}-3$ whenever $m\le 6$,
this shows that phaseless sampling is quite different from phase retrieval.

On the other hand, observe that for $m=1$, conditions (\ref{eqn:linear:1}) - (\ref{eqn:linear:4}) coincide with \cite[Theorem 1.2]{Sun2017}.
Therefore, $E$ is a linear sampling sequence for $|W_1|^2_{[N_1,N_2]}$ if and only if it is a phaseless sampling sequence for $V_1|_{[N_1,N_2]}$.

The paper is organized as follows.
In Section 2, we generalize a result on local phaseless sampling in \cite{Sun2017}.
For the real function space generated by B-splines with arbitrary knots, we give a necessary and sufficient condition
for  a sequence of points to be a local phaseless sampling sequence.
In Section 3, we give a linear phaseless sampling theorem in complex spline spaces, of which Theorem~\ref{thm:linear} is a consequence.
We also give an explicit reconstruction formula for  linearly recovering   functions from phaseless sampled values.

\section{Phaseless Sampling in Real Spline Spaces with Arbitrary Knots}

In this section, we give a characterization of phaseless sampling sequences
for  the real function space generated by B-splines with arbitrary knots.
We begin with some preliminary results.

Suppose that  $\{t_i:\, i\in\bbZ\}$ is a sequence of real numbers such that
\[
  t_i<t_{i+1},\qquad i\in\bbZ.
\]
Let $m$ be a positive integer and $\{m_i:\, i\in\bbZ\}$ be a sequence of positive integers such that
$m_i\le m$.

For each $i\in\bbZ$ and $1\le l\le m_i$,
let $Q_{i,l}$ be the $m$-degree B-spline with knot sequence $(t_i,\ldots, t_i, t_{i+1}, \ldots, t_{i+n})$,
where $t_i$ appears  $l$ times,
$t_j$ appears $m_j$ times for $i+1\le j\le i+n-1$,
and $t_{i+n}$ appears $m+2 - (l+m_{i+1}+\ldots +m_{i+n-1})$ times.

For simplicity, we denote by  $R_i$   the B-spline with knot sequence $(t_{i-n}$, $\ldots$, $t_{i-1},t_i)$,
where
$t_j$ appears $m_j$ times for $i-n+1\le j\le i-1$,
and $t_{i-n}$ appears $m+2 - (1+m_{i-1}+\ldots +m_{i-n+1})$ times.

Define
\[
  \Vm =\Big  \{\sum_{i\in\bbZ}\sum_{1\le l\le m_i} c_{i,l}Q_{i,l}:\, c_{i,l}\in\bbR \Big\}.
\]
Since for $x\in\bbR$, there are at most $m+1$ index pairs $(i,l)$ satisfying $Q_{i,l}(x)\ne 0$,
$\Vm$ is well defined.

Fix some $k\ge 1$. Denote
\[
   I_{0,k} = \{(i,l):\, (t_0,t_k)\cap \supp Q_{i,l}  \ne \emptyset \}.
\]
It is easy to see that $I_{0,k}$ contains exactly
\[
  N_{0,k}:=m+1+\sum_{l=1}^{k-1} m_l
\]
 elements
and
  there exist integers $i_0$, $i_k$ and $n_i$ such that
\[
   I_{0,k} = \{(i,l):\, i_0\le i\le i_k,   1\le l\le n_i\}.
\]
Convention: elements of $I_{0,k}$ are listed in the following order
$I_{0,k} = \{(i_n,l_n):\, 1\le n\le N_{0,k}\}$,
where $i_n < i_{n+1}$ or $i_n=i_{n+1}$ and $l_n >l_{n+1}$.

In general, if two pairs of indices $(i,l)$ and $(i',l')$ satisfy
$i<i'$ or $i=i'$ and $l>l'$, then we write
\[
  (i,l) < (i', l').
\]

The following lemma gives a basis and also the dimension of $\Vmk$.

\begin{Lemma}\label{Lem:spline:basis}
Let $k\ge 1$ be an integer.
Then $\{Q_{i,l}|_{[t_0,t_k]}:\, (i,l)\in I_{0,k}\}$ is a basis for
$\Vm|_{[t_0, t_k]}$.
\end{Lemma}

\begin{proof}
By \cite[Theorem 4.18]{Sch},
$\{Q_{i,l}:\, (i,l)\in I_{0,k}\}$ is linearly independent on $[t_0, t_k]$.
Since $\Vm|_{[t_0, t_k]}$ is the linear span of these B-splines, the conclusion follows.
\end{proof}

As shown in \cite{Chen2016}, it is impossible to do phase retrieval from phaseless sampled values for separable functions.
In the following, we give some characterization of separable functions in $\Vm$.
\begin{Lemma}\label{Lem:sep}
Let $k\ge 1$ be an integer
and  $f\in \Vmk$. Then the following three assertions are equivalent.

\begin{enumerate}
\item $f$ is separable.

\item $k\ge 2$, $f = \sum_{(i,l)\in  I_{0,k}} c_{n,l}Q_{i,l}$ for some sequence $\{c_{i,l}:\, (i,l)\in  I_{0,k}\} \subset \bbR$,
and there exists some integer $n_0\in (0,k)$ such that
$f|_{[t_0, t_{n_0}]} \ne 0$,
$f|_{[t_{n_0}, t_k]} \ne 0$,
and
$c_{i,l}=0$ whenever $ (t_{n_0-1}, t_{n_0+1}) \subset \supp Q_{i,l}$.

\item There exists some integer $n_0\in (0,k)$ and $f_1, f_2\in \Vmk\setminus\{0\}$ such that
$f=f_1+f_2$, $\supp f_1 \subset (t_0, t_{n_0})$
and $\supp f_2 \subset (t_{n_0}, t_k)$.

\end{enumerate}
\end{Lemma}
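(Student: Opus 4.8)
The plan is to establish the cycle $(1)\Rightarrow(2)\Rightarrow(3)\Rightarrow(1)$, with only $(1)\Rightarrow(2)$ requiring real work. For $(3)\Rightarrow(1)$ I would simply observe that if $f=f_1+f_2$ with $\supp f_1\subseteq[t_0,t_{n_0}]$ and $\supp f_2\subseteq[t_{n_0},t_k]$, then the two supports can only meet at $t_{n_0}$; since every basis B-spline vanishes at an interior endpoint of its support (its multiplicity there does not exceed $m_{n_0}\le m$), we have $f_1(t_{n_0})=f_2(t_{n_0})=0$, so $f_1(x)f_2(x)=0$ for every $x$, and as $f_1,f_2\ne 0$ the function $f$ is separable. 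For $(2)\Rightarrow(3)$ I would split the expansion $f=\sum_{(i,l)\in I_{0,k}}c_{i,l}Q_{i,l}$ according to the position of $\supp Q_{i,l}=[t_i,t_{i+n}]$ relative to $t_{n_0}$: each basis B-spline either ends at or before $t_{n_0}$, starts at or after $t_{n_0}$, or contains $(t_{n_0-1},t_{n_0+1})$, and these three classes are disjoint and exhaust $I_{0,k}$. The hypothesis of (2) kills the coefficients of the third (straddling) class, so collecting the first class into $f_1$ and the second into $f_2$ yields $f=f_1+f_2$ with the required one-sided supports; the two nonvanishing hypotheses $f|_{[t_0,t_{n_0}]}\ne 0$ and $f|_{[t_{n_0},t_k]}\ne 0$ then force $f_1\ne 0$ and $f_2\ne 0$, since $f$ agrees with $f_1$ on $[t_0,t_{n_0}]$ and with $f_2$ on $[t_{n_0},t_k]$.

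The main step is $(1)\Rightarrow(2)$. First I would rule out $k=1$: on the single knot interval $[t_0,t_1]$ every element of $\Vmk$ is a polynomial of degree at most $m$, and the product of two nonzero polynomials cannot vanish identically, so there are no separable functions and separability forces $k\ge 2$. Given a decomposition $f=g_1+g_2$ with $g_1g_2\equiv 0$ and $g_1,g_2\ne 0$, I would locate a transition knot. Picking $c<d$ with $g_1(c)\ne 0$, $g_2(d)\ne 0$, set $\beta=\sup\{x\le d:\ g_1(x)\ne 0\}$ and $\alpha=\inf\{x\ge\beta:\ g_2(x)\ne 0\}$, so that $g_1\equiv 0$ on $(\beta,d]$ and $g_2\equiv 0$ on $[\beta,\alpha)$. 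If $\beta<\alpha$ then $f\equiv 0$ on the nondegenerate interval $(\beta,\alpha)$; if $\beta=\alpha=:x^{*}$ then $g_2$ vanishes just to the left of $x^{*}$ and $g_1$ just to the right. In either case the transition point must be a knot: on a knot-free open interval each $g_j$ is a single polynomial, so a $g_j$ that vanishes on a subinterval vanishes on the whole piece, contradicting that it is nonzero arbitrarily close to the transition from the other side. Calling this knot $t_{n_0}$, the fact that $g_1\ne 0$ just left and $g_2\ne 0$ just right gives $t_{n_0}\in(0,k)$ together with $f|_{[t_0,t_{n_0}]}\ne 0$ and $f|_{[t_{n_0},t_k]}\ne 0$.

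It remains to verify the coefficient condition, and this is where I would use local linear independence. Writing $g_1=\sum a_{i,l}Q_{i,l}$ and $g_2=\sum b_{i,l}Q_{i,l}$, so that $c_{i,l}=a_{i,l}+b_{i,l}$, I note that $g_1\equiv 0$ on the knot interval immediately to the right of $t_{n_0}$ while $g_2\equiv 0$ on the one immediately to its left. By Lemma~\ref{Lem:spline:basis}, applied to each of these subintervals (this is the content of \cite[Theorem 4.18]{Sch}), the B-splines that are nonzero on a given open knot interval are linearly independent there; hence $a_{i,l}=0$ for every $Q_{i,l}$ active on the right interval and $b_{i,l}=0$ for every $Q_{i,l}$ active on the left interval. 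A B-spline straddling $t_{n_0}$, i.e.\ with $(t_{n_0-1},t_{n_0+1})\subset\supp Q_{i,l}$, is active on both neighboring intervals, so $a_{i,l}=b_{i,l}=0$ and therefore $c_{i,l}=0$, which is exactly the condition in (2). I expect the main obstacle to be this localization step: turning the single global identity $g_1g_2\equiv 0$ into the two facts that the break sits at one knot and that the bridging coefficients of each piece separately vanish there. The delicate points are the argument that the transition cannot occur at a non-knot and the bookkeeping that the left, right and straddling B-splines genuinely partition $I_{0,k}$ so that $f=f_1+f_2$ leaves no term unaccounted for.
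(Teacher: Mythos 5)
Your proposal is correct and follows essentially the same route as the paper: the implication (ii)$\Rightarrow$(iii) by sorting the B-splines into left, right, and straddling classes, (iii)$\Rightarrow$(i) trivially, and the main implication (i)$\Rightarrow$(ii) by using that on each knot interval $f_1$ and $f_2$ are polynomials with identically zero product, locating a transition knot, and then invoking local linear independence of the B-splines on the two adjacent knot intervals to kill the straddling coefficients of each summand separately. The only cosmetic difference is that the paper finds the transition knot combinatorially as $n_0=\max\{n:\,f_1|_{[t_{n_1},t_n]}=0\}$ rather than via your $\sup$/$\inf$ argument followed by the observation that the break must occur at a knot.
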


\begin{proof}
Since
$\{Q_{i,l}|_{[t_0,t_k]}:\, (i,l)\in I_{0,k}\}$ is a basis for
$\Vm|_{[t_0, t_k]}$,  for any $f\in \Vmk$,
there exist some sequence $\{c_{i,l}:\, (i,l)\in  I_{0,k}\} \subset \bbR$ such that
$f = \sum_{(i,l)\in  I_{0,k}} c_{n,l}Q_{i,l}$.

(i) $\Rightarrow$ (ii).\,\,
Assume that $f$ is separable, that is, there exist $f_1, f_2\in \Vmk\setminus\{0\}$ such that
$f = f_1 + f_2$ and $f_1(x)f_2(x) = 0$ for all $x\in [t_0, t_k]$.

Since $f_1, f_2\ne 0$, there exist integers $n_1, n_2 \in [0, k-1]$ such that
\[
  f_1|_{[t_{n_2},t_{n_2+1}]} \ne 0 \quad \mathrm{and} \quad f_2|_{[t_{n_1},t_{n_1+1}]} \ne 0.
\]
On the other hand, observe that both $f_1$ and $f_2$ are polynomials on $[t_n,t_{n+1}]$.
We see from $f_1(x) f_2(x) \equiv 0$ that
$f_1$ or $f_2$ must be identical to zero on $[t_n,t_{n+1}]$ for every $n\in [0, k-1]$.
Hence
\[
  f_1|_{[t_{n_1},t_{n_1+1}]} =0 \quad \mathrm{and} \quad f_2|_{[t_{n_2},t_{n_2+1}]} = 0.
\]
Consequently, $n_1\ne n_2$ and $k\ge 2$. Without loss of generality, assume that $n_1<n_2$.

Let
\begin{equation}\label{eqn:sep:e1}
  n_0 = \max \{n:\, f_1|_{[t_{n_1},t_n]}=0\}.
\end{equation}
Then we have $n_1<n_0 \le n_2$ and $f_1|_{[t_{n_0},t_{n_0+1}]}\ne 0$. Since $ f_1f_2=0$, we have
$f_2|_{[t_{n_0},t_{n_0+1}]}=0$.

Suppose that
$f_p = \sum_{(i,l)\in I_{0,k}} c_{i,l}^{(p)} Q_{i,l}$, $p=1,2$.
Since $f_1|_{[t_{n_0-1, t_{n_0}}]} = 0$ and
$f_2|_{[t_{n_0,n_0+1}]} = 0$, we see from Lemma~\ref{Lem:spline:basis} that
whenever $(t_{n_0-1}, t_{n_0+1}) \subset \supp Q_{i,l}$,
\[
  c_{i,l}^{(p)} = 0, \qquad p=1,2.
\]
Hence
\[
  c_{i,l} = c_{i,l}^{(1)} + c_{i,l}^{(2)}= 0.
\]

On the other hand, since  $f_1|_{[t_{n_0},t_{n_0+1}]}\ne 0$, we have $f|_{[t_{n_0},t_k]}\ne 0$.
Similarly we get
$f|_{[t_0, t_{n_0}]}\ne 0$ since
$f_2|_{[t_{n_1},t_{n_1+1}]} \ne 0$ and $n_1<n_0$.
Now we get the conclusion as desired.

(ii) $\Rightarrow$ (iii).
Let
\[
  f_1 = \sum_{\substack{(i,l)\in I_{0,k} \\ (t_{n_0}, t_k) \cap \supp Q_{i,l} =\emptyset }} c_{i,l} Q_{i,l},
  \qquad
  f_2 = \sum_{\substack{(i,l)\in I_{0,k} \\ (t_0, t_{n_0}) \cap \supp Q_{i,l} =\emptyset }} c_{i,l} Q_{i,l}.
\]
Then we have
$\supp f_1 \subset (t_0, t_{n_0})$
and $\supp f_2 \subset (t_{n_0}, t_k)$.
For any $(i,l)\in I_{0,k}$, there are three cases:
$(t_0, t_{n_0}) \cap \supp Q_{i,l} =\emptyset$,
$(t_{n_0-1}, t_{n_0+1})$ $\subset \supp Q_{i,l}$, or
$(t_{n_0}, t_k) \cap \supp Q_{i,l} =\emptyset$.
Hence $f=f_1+f_2$.
Moreover, $f_1|_{[t_0, t_{n_0}]} = f|_{[t_0, t_{n_0}]} \ne0$
and
$f_2|_{[t_{n_0},t_k]} = f|_{[t_{n_0},t_k]} \ne 0$.

(iii) $\Rightarrow$ (i) is obvious. This completes the proof.
\end{proof}

\begin{Remark}\upshape
Fix some integer $n\in (0, k)$.
The B-spline $Q_{i,l}$ which satisfies $(t_{n-1},t_{n+1})\subset \supp Q_{i,l}$
is exactly the B-spline whose knot sequence is of the form $(\ldots,t_{n-1},t_n,\ldots,t_n,t_{n+1},\ldots)$.
It is easy to see that the cardinality of the set consisting of such B-splines
is $m-m_n+1\ge 1$.
\end{Remark}

Next we introduce a result on local sampling in spline spaces,
which gives a characterization of  linear sampling sequences for $\Vmk$.

\begin{Proposition}[{\cite[Theorem 1.1]{Sun09}}]\label{prop:local sampling}
A sequence $E\subset [t_0, t_k]$ consisting of distinct points
is a linear sampling sequence for $\Vmk$ if and only if it satisfies the followings,
\begin{align}
\#  E                     &\ge m+1+\sum_{l=1}^{k-1} m_l,                                    \\
\# (E\cap [t_0, t_i))     &\ge  \sum_{l=1}^i m_l,       \quad   1\le i\le k,     \\
\# (E\cap (t_i, t_k]) &\ge  \sum_{l=i}^{k-1} m_l,       \quad   0\le i\le k-1,    \\
\# (E\cap(t_i, t_j))      &\ge  \sum_{l=i}^j m_l -m-1,     \quad   0\le i<j\le k,
\end{align}
\end{Proposition}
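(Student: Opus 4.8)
The plan is to recast the statement as a question about the column rank of a collocation matrix, and then to resolve it with the Schoenberg--Whitney theorem together with a Hall-type matching argument. By Lemma~\ref{Lem:spline:basis} the space $\Vmk$ is finite dimensional of dimension $N_{0,k}$, so being a linear sampling sequence is equivalent to injectivity of the sampling operator $Tf=(f(x_1),\dots,f(x_N))$: in finite dimensions injectivity produces a linear left inverse $L$, whence $f=\sum_{n=1}^N f(x_n)S_n$ with $S_n=Le_n$, while conversely any such reconstruction formula forces injectivity. Expanding $f$ in the basis $\{Q_{i,l}:\,(i,l)\in I_{0,k}\}$, injectivity of $T$ says exactly that the $N\times N_{0,k}$ collocation matrix $A=(Q_{i_\nu,l_\nu}(x_\mu))$ has full column rank $N_{0,k}$, i.e. that $A$ possesses a nonsingular square submatrix of order $N_{0,k}$.

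A nonsingular square submatrix corresponds to a selection of points $x_{\mu_1}<\dots<x_{\mu_{N_{0,k}}}$ from $E$. Listing the B-splines by increasing left endpoint of support and then increasing right endpoint---which is precisely the order $<$ fixed on $I_{0,k}$---the Schoenberg--Whitney theorem tells us that the associated square collocation matrix is nonsingular if and only if $Q_{i_\nu,l_\nu}(x_{\mu_\nu})\ne 0$ for every $\nu$, that is, each chosen point lies in the interior of the support of its B-spline. Thus $E$ is a linear sampling sequence if and only if one can choose, in increasing order, a point of $E$ inside each $\supp Q_{i_\nu,l_\nu}$, all points distinct---an order-respecting system of distinct representatives for the supports. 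Because these supports form a consecutive (interval) family, such a monotone matching exists precisely when the unordered Hall condition for a system of distinct representatives holds.

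The crux is to translate Hall's condition into the four stated inequalities. For each block of consecutive indices of $I_{0,k}$ one must determine the exact subinterval whose points are eligible to represent that block---the union of the supports of the block's B-splines---and count them. I expect this bookkeeping to be the main obstacle: one has to show that, among all blocks, Hall's inequalities collapse to just the global count (the first inequality, $\#E\ge N_{0,k}$), the left- and right-boundary counts (the second and third inequalities, from blocks abutting $t_0$ and $t_k$), and the interior counts (the fourth inequality, from blocks whose supports sit inside some $(t_i,t_j)$), with the sums $\sum_l m_l$ and the constant $-m-1$ emerging from counting knots, with multiplicity, in the relevant supports.

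For the necessity of the four inequalities there is a cleaner, constructive route that also double-checks the counts. The space of splines in $\Vmk$ supported in $[t_0,t_i]$, in $[t_i,t_k]$, and in $[t_i,t_j]$ has dimension exactly $\sum_{l=1}^i m_l$, $\sum_{l=i}^{k-1} m_l$, and $\sum_{l=i}^j m_l-m-1$, respectively (obtained by subtracting from $\dim\Vm|_{[\,\cdot\,]}$ the vanishing conditions needed to extend by zero at the enclosing knots). If one of the inequalities failed, the region would contain strictly fewer sampling points than this dimension, so a nonzero spline supported in the region would vanish at every sampling point it meets; being identically zero elsewhere it vanishes on all of $E$, giving a nonzero element of $\ker T$. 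The sufficiency of the four inequalities is then the forward implication of the matching equivalence of the preceding paragraphs.
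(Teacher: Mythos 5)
First, a point of reference: the paper offers no proof of Proposition~\ref{prop:local sampling} at all -- it is imported verbatim from \cite[Theorem~1.1]{Sun09} -- so there is no internal argument to compare yours against, and a self-contained proof is genuinely extra content. Judged on its own, your outline splits unevenly. The necessity half is complete and correct: the subspace of $\Vmk$ consisting of splines supported in $[t_i,t_j]$ is spanned by those $Q_{i,l}$ whose supports lie in $[t_i,t_j]$, and your dimension counts ($\sum_{l=1}^i m_l$, $\sum_{l=i}^{k-1}m_l$, and $\sum_{l=i}^j m_l-m-1$, the last read as vacuous when nonpositive) do agree with what one gets by subtracting from $N_{0,k}$ the number of basis splines whose supports meet the complementary regions. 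A nonzero spline in such a subspace vanishing at the (too few) points of $E$ inside the region vanishes on all of $E$, killing injectivity. That argument stands on its own.

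The sufficiency half, however, is a plan rather than a proof, and the step you defer is the theorem. Reducing full column rank of the collocation matrix to a monotone system of distinct representatives via Proposition~\ref{prop:Sch} is the right move, but two things are asserted where they need to be proved. (a) Hall's condition must be checked over \emph{all} index subsets, and its reduction to consecutive blocks of $I_{0,k}$ is not automatic: one has to use that consecutive B-spline supports overlap, so that whenever the neighborhoods of two separated blocks intersect, every support in the gap between them is already contained in their union, and filling in the gap increases the block size without enlarging the neighborhood. Without this, Hall for consecutive blocks does not imply Hall in general. (b) The eligible point set for a consecutive block must be identified exactly as the half-open or open interval appearing in the second, third and fourth conditions; the asymmetry between $[t_0,t_i)$, $(t_i,t_k]$ and $(t_i,t_j)$ comes from tracking precisely where each $Q_{i,l}$ is nonzero at $t_0$, at $t_k$, and at knots of multiplicity $m_i$, which interacts with the diagonal-nonvanishing criterion of Schoenberg--Whitney. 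Until (a) and (b) are written out, the forward implication -- that the four inequalities force a nonsingular maximal square submatrix -- has not been established.
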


The following result shows that if a sequence contains sufficiently many points,
it must be a linear sampling sequence for $\Vm$ on some interval.

\begin{Lemma}\label{Lem:samp}
Let $k\ge 1$ be an integer and  $E\subset [t_0, t_k]$.
If $\# E\ge m+1+\sum_{l=1}^{k-1} m_l$, then there exist some integers $n_1, n_2 \in [0, k]$
such that $n_1<n_2$ and $E\cap[t_{n_1}, t_{n_2}]$ is a linear sampling sequence for $\Vm|_{[t_{n_1}, t_{n_2}]}$.
\end{Lemma}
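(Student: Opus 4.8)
The plan is to deduce the lemma from Proposition~\ref{prop:local sampling} by a minimal-interval (extremal) argument. For integers $0\le a<b\le k$ set $d(a,b)=m+1+\sum_{l=a+1}^{b-1}m_l$, which by Lemma~\ref{Lem:spline:basis} equals $\dim\Vm|_{[t_a,t_b]}$; the hypothesis is exactly $\#E\ge d(0,k)$. Call a pair $(a,b)$ with $a<b$ \emph{admissible} if $\#(E\cap[t_a,t_b])\ge d(a,b)$. Since $(0,k)$ is admissible, the set of admissible pairs is nonempty, so I may choose an admissible pair $(n_1,n_2)$ for which $n_2-n_1$ is as small as possible. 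I will show that $E\cap[t_{n_1},t_{n_2}]$ satisfies the four conditions of Proposition~\ref{prop:local sampling} rewritten for the interval $[t_{n_1},t_{n_2}]$; the proposition then yields the conclusion. Throughout I use that $E$ is a set of distinct points, so at most one sample sits at any knot.

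The verification is by contradiction: if one condition failed, I would exhibit a strictly shorter admissible pair. The cardinality condition is admissibility itself. Suppose a prefix condition fails, i.e. $\#(E\cap[t_{n_1},t_i))<\sum_{l=n_1+1}^{i}m_l$ for some $n_1<i\le n_2$. Subtracting this from $\#(E\cap[t_{n_1},t_{n_2}])\ge d(n_1,n_2)$ and using the identity $d(n_1,n_2)-\sum_{l=n_1+1}^{i}m_l=d(i,n_2)$ gives $\#(E\cap[t_i,t_{n_2}])\ge d(i,n_2)+1$, so $(i,n_2)$ is admissible; since at most one sample lies at $t_{n_2}$, the borderline index $i=n_2$ cannot produce a failure (it would force $m_{n_2}>m$), so $i<n_2$ and $(i,n_2)$ is strictly shorter. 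The mirror computation handles the suffix condition, producing an admissible $(n_1,j)$ with $j<n_2$.

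The one case that behaves differently is the interior condition $\#(E\cap(t_i,t_j))\ge\sum_{l=i}^{j}m_l-m-1$, because the complement of the window $(t_i,t_j)$ inside $[t_{n_1},t_{n_2}]$ is the \emph{disjoint union} $[t_{n_1},t_i]\cup[t_j,t_{n_2}]$ rather than a single interval, so I must count both pieces at once. If the interior condition fails, then combining $\#(E\cap[t_{n_1},t_{n_2}])\ge d(n_1,n_2)$ with the identity $d(n_1,i)+d(j,n_2)=d(n_1,n_2)-\sum_{l=i}^{j}m_l+(m+1)$ shows that $\#(E\cap[t_{n_1},t_i])+\#(E\cap[t_j,t_{n_2}])\ge d(n_1,i)+d(j,n_2)+1$, whence by pigeonhole one of $(n_1,i)$, $(j,n_2)$ is admissible. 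Both are strictly shorter than $(n_1,n_2)$ once they are nondegenerate, and the degenerate possibilities $i=n_1$ or $j=n_2$ are excluded exactly as before (using $m_l\le m$); in particular the extreme instance $(i,j)=(n_1,n_2)$ cannot fail, since that would require $m_{n_1}+m_{n_2}\ge 2m+1$. In every case the minimality of $n_2-n_1$ is contradicted, so all four conditions hold for $[t_{n_1},t_{n_2}]$ and the proof is finished.

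I expect the interior condition to be the main obstacle: unlike the prefix and suffix conditions, its failure does not immediately shorten the interval from one side, and one must play the sparse window off against both ends simultaneously and invoke a pigeonhole step. The accompanying bookkeeping—verifying that the pair produced in each case is genuinely nondegenerate and strictly shorter—is precisely where the standing hypothesis $m_i\le m$ is used repeatedly, and pinning down these borderline indices is the only delicate part of the argument.
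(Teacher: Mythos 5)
Your proposal is correct: every counting identity checks out (in particular $d(n_1,n_2)-\sum_{l=n_1+1}^{i}m_l=d(i,n_2)$, and the exclusion of the borderline indices via $m_l\le m$ together with the fact that at most one sample sits at a knot), and the minimal admissible pair does satisfy all four conditions of Proposition~\ref{prop:local sampling}. The paper reaches the same conclusion by a different organization: it inducts on $k$, assumes $E$ is \emph{not} a linear sampling sequence for $\Vmk$, and in each of the three failure cases of Proposition~\ref{prop:local sampling} produces a strictly shorter interval still containing at least $\dim$ many points, to which the inductive hypothesis applies. Your extremal (minimal admissible interval) formulation replaces that induction by a well-ordering argument; the underlying arithmetic is the same, but two local maneuvers differ. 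First, where the paper handles the interior-gap case by assuming WLOG $i>0$ and recursing on $[t_0,t_i]$ using the already-verified prefix bound, you split the complement of the sparse window into the two end pieces and apply a pigeonhole; this is cleaner in that it needs no WLOG and no prior verification of the prefix/suffix conditions. Second, your approach must directly certify all four conditions on the minimal interval, which forces you to confront the degenerate indices $i=n_1$, $j=n_2$ explicitly — the paper's induction sidesteps these because each recursion step only needs a cardinality bound, not the full set of conditions. The one place where your write-up is terse is the semi-degenerate interior case (e.g.\ $i=n_1<j<n_2$): there the identity $d(n_1,i)+d(j,n_2)=d(n_1,n_2)-\sum_{l=i}^{j}m_l+(m+1)$ picks up an extra $m_{n_1}$, and one must additionally use $\#(E\cap\{t_{n_1}\})\le 1$ and $m_{n_1}\le m$ to conclude that $(j,n_2)$ alone is admissible; your parenthetical ``excluded exactly as before'' gestures at this correctly, but it deserves the explicit two-line computation. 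With that filled in, the proof is complete and, in my view, slightly more economical than the paper's.
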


\begin{proof}
We prove the conclusion with induction over $k$.
We see from Proposition~\ref{prop:local sampling} that it is the case if $k=1$.

Now we assume that  for some $n\ge 1$, the conclusion is true  for all $1\le k\le n$.
Let us consider the case of $k=n+1$.

Assume that $E\subset [t_0, t_k]$,
$\#E  \ge m+1+\sum_{l=1}^{k-1} m_l$,
and $E$  is not a linear sampling sequence for $\Vmk$.
By Proposition~\ref{prop:local sampling}, there are three cases.

(i).\,\, There is some integer $i\in [1, k]$ such that  $\#(E\cap[t_0, t_i))<\sum_{l=1}^i m_l$.

In this case, we have $i<k$ and  $\#(E\cap [t_i, t_k]) \ge m+1+\sum_{l=i+1}^{k-1} m_l$.
Replacing $(t_0,t_k)$ with $(t_i, t_k)$, we see from the inductive assumption  that there exist
some integers $n_1,n_2 \in [i, k]$  such that $n_1<n_2$ and
$E\cap[t_{n_1}, t_{n_2}]$ is a linear sampling sequence for $\Vm|_{[t_{n_1}, t_{n_2}]}$.

(ii).\,\, There is some integer $i \in [0, k-1]$ such that  $\#(E\cap(t_i, t_k] )< \sum_{l=i}^{k-1} m_l$.

Similarly to the previous case we can prove the conclusion.

(iii).\,\, $\#(E\cap[t_0, t_i))\ge \sum_{l=1}^i m_l$ and
$\#(E\cap(t_{i-1}, t_k] ) \ge  \sum_{l=i-1}^{k-1} m_l$  for any $1\le i\le k$.

If for any integers $i<j$ with $[i,j] \subset [0,k]$,
we have $\# (E\cap(t_i, t_j))      \ge  \sum_{l=i}^j m_l -m-1$, then we see from Proposition~\ref{prop:local sampling}
that $E$ is a linear sampling sequence for $\Vmk$.
If  $\# (E\cap(t_i, t_j))      <  \sum_{l=i}^j m_l -m-1$ for some $i<j$,
then we have $\sum_{l=i}^j m_l\ge m+2$ and $(i,j)\ne (0,k)$.
Without loss of generality, we assume that $i>0$. Then
\begin{eqnarray*}
  \#(E\cap [t_0, t_i]) &=& \#(E\cap [t_0, t_j)) - \#(E\cap (t_i, t_j)) \\
  &\ge& \Big(\sum_{l=1}^j m_l \Big)-\Big(\sum_{l=i}^j m_l -m-1\Big)= \sum_{l=1}^{i-1}m_l +m+1.
\end{eqnarray*}
Observe that $i\le k-1= n$.
By the inductive assumption, there exist some integers $n_1<n_2$ in $[0, i]$ such that $E\cap[t_{n_1},t_{n_2}]$ is a linear sampling sequence
for $\Vm|_{[t_{n_1}, t_{n_2}]}$.

By induction, the conclusion is true for any $k\ge 1$.
\end{proof}

To prove the main result, we also need the following version of the Sch\"onberg-Whitney Theorem.

\begin{Proposition}[{\cite[Theorem 4.65]{Sch}}]\label{prop:Sch}
Let $k\ge 1$ be an integer and $x_1<\ldots<x_{N_{0,k}}$ be $N_{0,k}$ real numbers.
Then the $N_{0,k}\times N_{0,k}$ matrix
\[
  [ Q_{i,l}(x_n)]_{1\le n\le N_{0,k}, (i,l)\in I_{0,k}}
\]
 is invertible if and only if
none of its diagonal entries is equal to zero.
\end{Proposition}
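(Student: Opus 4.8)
The plan is to exploit the staircase sparsity of the collocation matrix together with the total positivity of the B-spline system. Order the $N:=N_{0,k}$ B-splines as $(i_1,l_1)<\cdots<(i_N,l_N)$ and let $a_n,b_n$ denote the endpoints of $\supp Q_{i_n,l_n}$. Enlarging the index pair shifts the knot sequence rightward, so $(a_n)$ and $(b_n)$ are both nondecreasing; since $x_1<\cdots<x_N$, the matrix $M=[Q_{i_m,l_m}(x_n)]$ (rows indexed by the points, columns by the B-splines) has a banded staircase pattern, and its $n$-th diagonal entry is nonzero exactly when $x_n$ lies in the interior $(a_n,b_n)$ of the $n$-th support, since a B-spline is nonnegative and strictly positive there.

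For necessity I would argue contrapositively. If $Q_{i_n,l_n}(x_n)=0$ for some $n$, then $x_n\le a_n$ or $x_n\ge b_n$. In the first case monotonicity of the left endpoints gives $x_j\le x_n\le a_n\le a_m$, hence $Q_{i_m,l_m}(x_j)=0$, for all $j\le n$ and $m\ge n$; thus $M$ carries a zero block in rows $1,\dots,n$ and columns $n,\dots,N$ (boundary coincidences being absorbed by the support conventions of \cite{Sch}). This block spans $n+(N-n+1)=N+1>N$ rows and columns together, so the first $n$ rows are supported in only $n-1$ columns and are therefore linearly dependent, forcing $M$ to be singular. The case $x_n\ge b_n$ is symmetric, using the right endpoints and the last $N-n+1$ rows.

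The substantive direction is sufficiency, and here I expect the real work. Assume every diagonal entry is nonzero and suppose, for contradiction, that $M$ is singular. Then there is a nonzero coefficient vector $(c_m)$ for which $s:=\sum_m c_m Q_{i_m,l_m}\in\Vmk$ vanishes at all $N$ points $x_1,\dots,x_N$. I would invoke the variation-diminishing property of B-splines, which bounds the number of appropriately counted zeros of $s$ on $(t_0,t_k)$ by the number of sign changes $S^-(c)$ of the coefficient sequence, and hence by $N-1$. Since $s$ has $N$ distinct zeros, this is the contradiction sought.

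The delicate point --- the main obstacle --- is that this zero count is only legitimate once one rules out $s$ vanishing identically on a subinterval carrying some $x_n$, and this is precisely what the diagonal hypotheses provide: each $x_n$ sits in the interior of $\supp Q_{i_n,l_n}$, which pins the zeros to the supports and forces the $N$ vanishings to be genuine isolated sign changes rather than artefacts of an identically zero piece. Making this rigorous amounts to reproducing the total-positivity machinery for B-splines, i.e. the nonsingularity of banded totally positive collocation matrices; an alternative and perhaps cleaner implementation is a direct induction on the degree $m$ via the Cox--de Boor recurrence, which writes each B-spline as a convex-type combination of two lower-degree B-splines, expresses $M$ through lower-degree collocation matrices, and propagates strict positivity of the determinant up from the trivial piecewise-constant case. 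I would lean on the results already available in \cite{Sch} to shorten this step rather than rebuild total positivity from scratch.
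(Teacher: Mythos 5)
The paper does not prove this proposition at all: it is imported verbatim from Schumaker's book as \cite[Theorem 4.65]{Sch} (the Sch\"onberg--Whitney theorem), so there is no in-paper argument to measure you against. Your proposal is therefore doing strictly more than the paper does, and what you sketch is the standard proof of that theorem. The necessity half is essentially complete and correct: the monotonicity of the support endpoints along the ordering of the index pairs $(i,l)$ gives the staircase pattern, and a vanishing diagonal entry produces a zero block whose complementary rows are confined to too few columns, forcing singularity (your parenthetical about boundary coincidences is the right place to be careful, since $Q_{i,l}$ can be nonzero at a left knot of full multiplicity, but the inequalities still close up under the usual right-continuity convention). The sufficiency half, which you correctly identify as the substantive direction, is only an outline: the step from ``$s=\sum c_m Q_{i_m,l_m}$ vanishes at $N_{0,k}$ points'' to a contradiction with a zero bound of $N_{0,k}-1$ requires the spline zero-counting theorem together with the local linear independence argument that excises intervals where $s$ vanishes identically, and you explicitly defer that machinery to \cite{Sch}. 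That is a legitimate way to finish, but it means your proof ultimately rests on the same reference the paper cites for the whole statement; if you want a self-contained argument you would need to carry out the Cox--de Boor induction you mention, or reproduce the total-positivity argument, rather than gesture at it.
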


We are now ready to give a complete characterization of local
phaseless sampling sequences for $\Vm$.

\begin{Theorem} \label{Thm:phaseless:local}
A sequence $E\subset [t_0, t_k]$ consisting of distinct points
is a phaseless sampling sequence for $|\Vm|^2_{[t_0,t_k]}$ if and only if it satisfies the followings,
\begin{align}
\#  E                     &\ge 2m+1+\sum_{l=1}^{k-1} 2m_l,                                                   \label{eqn:PR:1} \\
\# (E\cap [t_0, t_n))     &\ge  m+m_n+\sum_{l=1}^{n-1} 2m_l,       \quad   1\le n\le k,    \label{eqn:PR:2} \\
\# (E\cap (t_n, t_k]) &\ge  m+m_n+\sum_{l=n+1}^{k-1} 2m_l,       \quad   0\le n\le k-1,      \label{eqn:PR:3} \\
\# (E\cap(t_{n_1}, t_{n_2}))      &\ge m_{n_1} + m_{n_2}-1+ \sum_{l=n_1+1}^{n_2-1} 2 m_l,     \quad   0\le n_1<n_2 \le k,  \label{eqn:PR:4}
\end{align}
\end{Theorem}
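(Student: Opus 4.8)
The plan is to reduce the phaseless problem to the ordinary (linear) sampling problem treated in Proposition~\ref{prop:local sampling}, exploiting the fact that for a real function $f$, knowing $|f|^2$ pointwise is the same as knowing $f$ up to the global sign on each connected component where $f$ does not vanish. The key structural observation is that squaring a B-spline expansion lands in a \emph{finer} spline space: if $f=\sum_{(i,l)} c_{i,l}Q_{i,l}$ is $m$-degree with knot multiplicities $m_i$, then $f^2$ is a $2m$-degree piecewise polynomial with the same breakpoints $t_i$ but with each smoothness deficiency doubled, so the relevant multiplicity at $t_i$ becomes $2m_i$ (capped appropriately at the ends). This is exactly why the right-hand sides in \eqref{eqn:PR:1}--\eqref{eqn:PR:4} have the shape ``$2\cdot(\text{interior multiplicities})$'' together with the boundary corrections $m+m_n$ and $m_{n_1}+m_{n_2}-1$. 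Counting dimensions, the $2m$-degree space $\mathcal V^{(2)}_m|_{[t_0,t_k]}$ carrying all such squares has dimension $2m+1+\sum_{l=1}^{k-1}2m_l = N^{(2)}_{0,k}$, matching \eqref{eqn:PR:1}.

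\medskip
I would organize the argument in three stages. \emph{First}, for sufficiency, I observe that conditions \eqref{eqn:PR:1}--\eqref{eqn:PR:4} are precisely the hypotheses of Proposition~\ref{prop:local sampling} applied to the squared space $\mathcal V^{(2)}_m$ with multiplicity sequence $\{2m_i\}$ and degree $2m$ (one must check the boundary terms translate correctly, the source of the $m+m_n$ and $m_{n_1}+m_{n_2}-1$ adjustments). Hence $E$ is a linear sampling sequence for $\mathcal V^{(2)}_m|_{[t_0,t_k]}$, so from the samples $|f(x_n)|^2 = f(x_n)^2$ we recover the function $f^2\in\mathcal V^{(2)}_m$ exactly, and therefore $|f|$ on all of $[t_0,t_k]$. \emph{Second}, to upgrade ``recovering $|f|$'' to ``recovering $f$ up to a global sign'' for \emph{nonseparable} $f$, I use Lemma~\ref{Lem:sep}: since $f$ is nonseparable, it does not split across any interior breakpoint, which means the sign of $f$ is locked together across the whole support, and a single global $\pm$ is the only ambiguity. \emph{Third}, for necessity, I assume one of the four inequalities fails and construct two nonseparable functions $f,g\in\Vmk$ with $|f|\equiv|g|$ but $g\neq\pm f$; the natural device is to take $g$ which flips sign on a subinterval where $E$ is too sparse to detect the flip, using the Schönberg--Whitney criterion (Proposition~\ref{prop:Sch}) to guarantee that the deficient configuration of sampling points genuinely fails to determine the square.

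\medskip
The main obstacle I expect is making the ``squared space'' bookkeeping rigorous at the knots and at the boundary. One has to verify that the products $Q_{i,l}Q_{i',l'}$ really span $\mathcal V^{(2)}_m$ and that the correct knot multiplicities are $2m_i$ interior but only $m+m_n$ near the cutoff (reflecting that at $t_0$ and $t_k$ the function is cut rather than squared symmetrically, and a square $f^2\ge 0$ cannot change sign, which removes one degree of freedom — this is the source of the ``$-1$'' in \eqref{eqn:PR:4}). Equally delicate is the necessity direction: the sign-flip counterexample must remain in $\Vmk$ (the flipped piece must still be expressible in the B-spline basis) and must be nonseparable, which requires the flip to occur at a point where $f$ does not vanish, so that $g=\operatorname{sgn}\text{-flipped } f$ does not accidentally become separable. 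I would handle this by choosing the flip location inside a single polynomial piece $[t_n,t_{n+1}]$ on which $f$ is sign-definite, and by invoking the multiplicity count in the Remark after Lemma~\ref{Lem:sep} to ensure the flipped function stays in the space. Once these two accounting points are settled, the equivalence with Proposition~\ref{prop:local sampling} does the rest.
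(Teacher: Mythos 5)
Your reduction of sufficiency to Proposition~\ref{prop:local sampling} applied to a ``squared space'' does not work, and the error is already in the bookkeeping you flag as the main obstacle. If $f\in\Vm$ has smoothness $C^{m-m_i}$ at $t_i$, then $f^2$ is a $2m$-degree piecewise polynomial with the \emph{same} smoothness $C^{m-m_i}$ there, so the knot multiplicity of the squared space at $t_i$ is $2m-(m-m_i)=m+m_i$, not $2m_i$. Consequently $\dim \Lspan(|\Vm|^2_{[t_0,t_k]})=2m+1+\sum_{l=1}^{k-1}(m+m_l)$ (this is exactly Lemma~\ref{Lem:dimension}), which is strictly larger than the bound $2m+1+\sum_{l=1}^{k-1}2m_l$ in \eqref{eqn:PR:1} whenever some $m_l<m$. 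The conditions that characterize linear sampling sequences for the squared space are \eqref{eqn:ab:1}--\eqref{eqn:ab:4} of Theorem~\ref{thm:main:2}, and they are visibly stronger than \eqref{eqn:PR:1}--\eqref{eqn:PR:4}; the two sets of conditions coincide only when every $m_l=m$ (e.g.\ $m=1$ with simple knots, as the paper remarks after Theorem~\ref{thm:linear}). So under the hypotheses of Theorem~\ref{Thm:phaseless:local} the sequence $E$ is in general \emph{not} a sampling sequence for $\Lspan(|\Vm|^2_{[t_0,t_k]})$, you cannot recover $f^2$ as an element of that space, and your first stage collapses. The whole point of the theorem is that the weaker count \eqref{eqn:PR:1}--\eqref{eqn:PR:4} suffices precisely because one only needs to separate \emph{nonseparable} functions up to a sign, not to invert a linear system on the squared space.

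The paper's sufficiency argument is correspondingly different and cannot be shortcut this way: given $|f_1|=|f_2|$ on $E$ with $f_1\ne\pm f_2$, it splits $E$ into $E_1=\{x_n: f_1(x_n)=-f_2(x_n)\}$ and $E_2=\{x_n: f_1(x_n)=f_2(x_n)\}$, uses Lemmas~\ref{Lem:samp}, \ref{Lem:L3a} and \ref{Lem:L3} to produce two \emph{adjacent} subintervals on which $E_1$, respectively $E_2$, is a linear sampling sequence for $\Vm$ restricted there (so $f_1=-f_2$ on one and $f_1=f_2$ on the other), and then concludes by induction on $k$ in Lemma~\ref{Lem:L4} that both $f_1$ and $f_2$ must be separable, splitting at the common endpoint of those subintervals. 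Your second stage (``nonseparable implies the sign is locked globally'') is also too quick, since it presupposes that $|f|$ is already known everywhere, which is exactly what the failed first stage was supposed to deliver. Your necessity sketch is in the right spirit --- the paper indeed builds counterexamples from null vectors $c,c'$ of the two sample submatrices, sets $f_{1,2}=\frac12\sum(c_{i,l}\pm c'_{i,l})Q_{i,l}$, and uses Proposition~\ref{prop:Sch} together with Lemma~\ref{Lem:sep} to force nonseparability --- but as written it is only a plan, and the nonseparability verification is the genuinely delicate part there.
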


\begin{proof}[Proof of the necessity of Theorem~\ref{Thm:phaseless:local}]
Denote $E=\{x_n:\, 1\le n\le N\}$.
First, we prove that
\begin{eqnarray}
\# (E\cap [t_0, t_1)) &\ge&  m + m_1,  \quad       \label{eqn:PR:2a}  \\
\# (E\cap (t_{k-1}, t_k]) &\ge& m +m_{k-1}.  \quad       \label{eqn:PR:3a}
\end{eqnarray}
Assume on the contrary that
(\ref{eqn:PR:2a}) is false.  Then $\#(E\cap[t_0, t_1)) \le m+m_1-1$.
To get a contradiction, we only need to consider the case of $\#(E\cap[t_0, t_1)) = m+m_1-1$.
Split $E$ into two subsequences $E_1$ and $E_2$ such that $E_1 \subset [t_0, t_1)$,
$\#E_1 = m$ and $\#(E_2\cap [t_0, t_1))= m_1-1$.
Define the $m\times N_{0,k}$ matrix $A_1$
and the $\#E_2\times N_{0,k}$ matrix $A_2$ respectively by
\begin{eqnarray}
  A_1 &=& [Q_{i,l}(x_n)]_{x_n\in E_1, (i,l)\in I_{0,k}}, \label{eqn:A1a}\\
  A_2 &=& [Q_{i,l}(x_n)]_{x_n\in E_2, (i,l)\in I_{0,k}}. \label{eqn:A2a}
\end{eqnarray}
Note that there are only $m+1$ B-splines whose supports contain the interval $(t_0, t_1)$.
Hence the last $N_{0,k}-m-1$ columns of $A_1$ are zeros.
Since $\rank(A_1) \le m$, there is some $c = (c_{i,l})^t \in \bbR^{N_{0,k}}$  such that
\[
  A_1 c = 0,
\]
all the last $N_{0,k}-m-1$ entries of $c$ are $1$,
and not all of the first $m+1$ entries are zeros.
Moreover, we see from Proposition~\ref{prop:Sch} that
any $m$ vectors in the first $m+1$ column vectors are linearly independent.
Hence none entry of $c$ is zero.

On the other hand, note that for the first $m_1$ B-splines $Q_{i,l}$ with $(i,l)\in I_{0,k}$,
we have $Q_{i,l}(x)=0$ for $x\ge t_1$.
Hence the submatrix of $A_2$ consisting of the first $m_1$ columns
has at most $\#E_2\cap [t_0,t_1)$ non-zero rows.
Therefore, its rank is no greater than $m_1-1$.
  Consequently, there is some $c' = (c'_{i,l})^t \in \bbR^{N_{0,k}}$,
 not all of whose first $m_1$ entries are zeros,  such that
\[
  A_2 c' = 0.
\]
Let
\begin{equation}\label{eqn:fg}
\left\{
\begin{aligned}
f_1 &= \sum_{(i,l)\in I_{0,k}} \frac{1}{2}(c_{i,l} + c'_{i,l}) Q_{i,l},  \\
f_2 &= \sum_{(i,l)\in I_{0,k}} \frac{1}{2}(c_{i,l} - c'_{i,l}) Q_{i,l}.
\end{aligned}\right.
\end{equation}
By multiplying a factor we can suppose that
\begin{equation}\label{eqn:cc1}
\min\{|c'_{i,l}|:\,    |c'_{i,l}|>0\}  > \max\{|c_{i,l}|:\,    |c_{i,l}|>0\}.
\end{equation}
As a result, none of entries of $c\pm c'$ is zero. By Lemma~\ref{Lem:sep},
both $f_1$ and $f_2$ are nonseparable.  Moreover,  $f_1\pm f_2 \ne 0$ and
\begin{eqnarray*}
   f_1(x_n) &=& -f_2(x_n),\qquad x_n\in E_1, \\
   f_1(x_n) &=&  f_2(x_n),\qquad x_n\in E_2.
\end{eqnarray*}
Hence we can not recover $f$ from unsigned samples, which contradicts with the hypothesis.
This proves (\ref{eqn:PR:2a}).
Similarly we can prove (\ref{eqn:PR:3a}).

{\color{blue} Next we prove (\ref{eqn:PR:4}).} First, we show that for any integer $p\in [0,k-1]$,
\begin{equation}\label{eqn:n}
\# (E\cap (t_p,t_{p+1})) \ge m_p+m_{p+1}-1.
\end{equation}
We see from (\ref{eqn:PR:2a}) and (\ref{eqn:PR:3a}) that (\ref{eqn:n}) is true for $p=0$ or $k-1$.
Next we assume that $\# (E\cap (t_p, t_{p+1})) = m_p+m_{p+1}-2$ for some $p\in [1,k-2]$.
Split $E$ into two subsequences $E_1$ and $E_2$ such that
$E_1\subset [t_0, t_{p+1})$,
$E_2\subset (t_p, t_k]$,
$\#(E_1\cap (t_p, t_{p+1})) = m_p-1$
and $\#(E_2\cap (t_p, t_{p+1})) = m_{p+1}-1$.

Define the matrices $A_1$ and $A_2$ by (\ref{eqn:A1a}) and (\ref{eqn:A2a}), respectively.
Then the right $\sum_{l=p+1}^{k-1} m_l$ columns of $A_1$
and the left $\sum_{l=1}^{p} m_l$ columns of $A_2$ are zeros.
Moreover, the submatrix of $A_1$ consisting of the
$(1+\sum_{l=p+1}^{k-1} m_l)$-th, $\ldots$,
$(m_p+\sum_{l=p+1}^{k-1} m_l)$-th columns on the right
is of the form $\begin{pmatrix}
0 \\ A_{11}
\end{pmatrix}$, where $A_{11}$ is an $(m_p-1)\times m_p$ matrix
and any $(m_p-1)\times(m_p-1)$ submatrix of $A_{11}$ is nonsingular, thanks to Proposition~\ref{prop:Sch}.
Consequently, there is some $c \in \bbR^{N_{0,k}}$, for which the first
$m+1+\sum_{l=1}^{p-1} m_l$ entries are zeros and none of the rest is zero, such that
\[
  A_1 c =0.
\]
Note  that such a vector $c$ also exists if $m_p=1$.

Similarly, we can find some
$c' \in \bbR^{N_{0,k}}$, for which the last
$m+1+\sum_{l=p+2}^{k-1} m_l$ entries are zeros and none of the rest is zero, such that
\[
  A_2 c' =0.
\]
Again, by multiplying a constant, we can suppose that $c_{i,l} \pm c'_{i,l}\ne 0$ if one of
$c_{i,l}$ and $c'_{i,l}$ is not zero.
Let $f_1$ and $f_2$ be defined by (\ref{eqn:fg}).
For $f_1$, we see from the construction that
for any integer $q\in (0,k)$, there exists some $Q_{i,l}$ such that $(t_{q-1},t_{q+1})\subset \supp Q_{i,l}$
and the coefficient of $Q_{i,l}$ is not zero.
By Lemma~\ref{Lem:sep}, $f_1$ is nonseparable.
Similarly we can prove that $f_2$ is also nonseparable.
With similar arguments as in the previous case we get a contradiction.
Moreover, the above arguments also show that $\#(E\cap(t_p, t_{p+1})) < m_p+m_{p+1}-2$ is impossible.
Hence (\ref{eqn:n}) is true.

Now we assume that (\ref{eqn:PR:4}) is false. Then there exist integers  $n_1, n_2\in [0,k]$
such that $n_2-n_1\ge 2$ and $\# (E\cap (t_{n_1}, t_{n_2})) \le m_{n_1}+m_{n_2}-2+\sum_{l=n_1+1}^{n_2-1}2m_l$.
Let
\begin{align}
i_2 &= \min\Big\{i\in [0,k]:\,
  \# (E\cap (t_{i_1}, t_i)) \le m_{i_1}+m_i-2+\sum_{l=i_1+1}^{i-1}2m_l  \label{eqn:n2} \\
  &\qquad\qquad\qquad\qquad \mbox{ \ for some } i_1<i  \Big\},   \nonumber\\
i_1 &= \max\Big\{i\in [0,i_2):\, \# (E\cap (t_i, t_{i_2})) \le m_i+m_{i_2}-2+\sum_{l=i+1}^{i_2-1}2m_l \}.\label{eqn:n1}
\end{align}
Then we have
\begin{align}
  &   i_1  \le i_2-2,  \label{eqn:n1n2:1}\\
  &  \# (E\cap (t_{i_1}, t_{i_1+1})) = \# (E\cap (t_{i_1}, t_{i_1+1}]) = m_{i_1}+m_{i_1+1}-1, \label{eqn:n1 n11}  \\
  &  \# (E\cap (t_{i_2-1}, t_{i_2})) = \# (E\cap [t_{i_2-1}, t_{i_2})) = m_{i_2-1}+m_{i_2}-1, \label{eqn:n1n2:3} \\
  &  \#(E\cap [t_i,t_{i+1})) = \#(E\cap (t_i,t_{i+1}]) = m_i+m_{i+1}, \,\, i_1+1\le i\le i_2-2,\label{eqn:n1n2:4} \\
  &  E\cap(t_{i_1},t_{i_2})\cap \{t_i:\,i\in\bbZ\} = \emptyset. \label{eqn:n1n2:5}
\end{align}
Moreover, we see from the definitions of $i_1, i_2$ and (\ref{eqn:n1 n11}) that
\begin{align}
  \# (E\cap (t_i,t_{i+2})) &\ge m_i+2m_{i+1}+m_{i+2}-1,  & 0\le i\le i_1-1,  \label{eqn:nn2}\\
 %\color{red}
 \# (E\cap (t_i,t_{i_1}]) &\ge m_i + \sum_{l=i+1}^{i_1-1} 2m_l + m_{i_1}  , & 0\le i\le i_1-1.\label{eqn:nn1}
\end{align}
It follows % from  (\ref{eqn:n1 n11}) and (\ref{eqn:nn1})
that
if $i_1>1$, then
\begin{eqnarray*}
 \#(E\cap [t_0, t_{i_2}))
   &= &  \#(E\cap [t_0, t_1]) + \#(E\cap (t_1, t_{i_1}])    +  \#(E\cap (t_{i_1}, t_{i_2}))\\
   &\ge& m+m_{i_2}-2+\sum_{l=1}^{i_2-1}2m_l.
\end{eqnarray*}
A simple computation shows that the above inequality is also true if $i_1\le 1$.

By (\ref{eqn:n}), there is some $E_2\subset E$ such that
\begin{eqnarray}
  &&E\cap[t_{i_2}, t_k]\subset E_2, \label{eqn:E12:1}\\
  &&\#(E_2 \cap [t_0, t_{i_2}) ) = \sum_{l=1}^{i_2} m_l -1 ,  \label{eqn:E12:2}\\
  &&\#(E_2\cap(t_i,t_{i+1})) = m_{i+1}, \qquad 0 \le i\le i_2-2,\label{eqn:E12:3} \\
  &&\#(E_2\cap(t_{i_2-1},t_{i_2})) = m_{i_2}-1.\label{eqn:E12:4}
\end{eqnarray}
Let $E_1 = E\setminus E_2$.
Define
\begin{eqnarray}
  A_1 &=& [Q_{i,l}(x_n)]_{x_n\in E_1, (i,l)\in I_{0,k}}, \label{eqn:A1}\\
  A_2 &=& [Q_{i,l}(x_n)]_{x_n\in E_2, (i,l)\in I_{0,k}}. \label{eqn:A2}
\end{eqnarray}
Note that the first $\sum_{l=1}^{i_2}m_l$ columns of $A_2$ has the form
$\begin{pmatrix}
A_{21} \\
0
\end{pmatrix}$,
where $A_{21}$ is a $(\sum_{l=1}^{i_2}m_l-1)\times(\sum_{l=1}^{i_2}m_l)$ matrix which can be written as
\[
 A_{21}=
  \begin{pmatrix}
   * & * & *  & ?  & ?  & ? & ?  & \ldots \\
   * & * & *  & ?  & ?  & ? & ?  & \ldots \\
   0 & 0 & *  & *  & *  & * & ?  & \ldots \\
   0 & 0 & *  & *  & *  & * & ?  & \ldots \\
   0 & 0 & *  & *  & *  & * & ?  & \ldots \\
   0 & 0 & 0  & 0  & 0  & * & *  & \ldots \\
   0 & 0 & 0  & 0  & 0  & * & *  & \ldots \\
     &   &   & \ldots &  \\
  \end{pmatrix}.
\]
Here '$*$' stands for a non-zero entry,  '$?$' stands for uncertainty, and we set $m_1=2$ and $m_2=3$ for simplicity.
By Proposition~\ref{prop:Sch}, every $(\sum_{l=1}^{i_2}m_l-1)\times(\sum_{l=1}^{i_2}m_l-1)$ submatrix of $A_{21}$ is invertible. Hence there exists
some  $ c' \in \bbR^{N_{0,k}}$, for which none of the first $\sum_{l=1}^{i_2}m_l$ entries is zero and the rest entries are zeros, such that
\[
  A_2 c' = 0.
\]

On the other hand, since $E_1\subset [t_0, t_{i_2})$, we have $A_1 = (A_{11}, 0)$,
where $A_{11}$ has $m+1+\sum_{l=1}^{i_2-1}m_l$ columns and at least
$m-1+\sum_{l=1}^{i_2-1}m_l$ rows.
Note that
\[
\#(E_1\cap (t_{i_1}, t_{i_1+1}]) = m_{i_1}-1
\]
and
\[
  \#(E_1\cap (t_i, t_{i+1}]) = m_i \,\, \mathrm{for}\, i_1+1\le i\le i_2-1.
\]
We have $A_{11} = \begin{pmatrix}
? & 0 \\
? & A_{111}
\end{pmatrix}$,
where $A_{111}$ is a $(\sum_{l=i_1}^{i_2-1}m_l-1)\times (\sum_{l=i_1}^{i_2-1}m_l)$ matrix.
We see from the above arguments that $A_{111}$ has the following form,
\[
  A_{111} =
  \begin{pmatrix}
 B_{i_1}  &    0        & 0          & \ldots &   0 \\
 ?        & B_{i_1+1}   & 0          & \ldots &   0 \\
 ?        & ?           & B_{i_1+2}  & \ldots &   0 \\
          &\ldots       &            & \ldots &   0 \\
 ?        & ?           &  ?         & \ldots & B_{i_2-1} \\
  \end{pmatrix},
\]
where $B_{i_1}$ is an $(m_{i_1}-1)\times m_{i_1}$ matrix
and $B_i$ is an $m_i\times m_i$ matrix for $i_1+1\le i\le i_2-1$.
Hence there exists some real vector $\tilde c\ne 0$, which has
$\sum_{l=i_1}^{i_2-1}m_l$ entries, such that
\[
  A_{111}\tilde c=0.
\]
Note that all entries of $B_i$ are nonzeros, $i_1\le i\le i_2-1$.
Moreover, the $(p,q)$-entry of $A_{111}$ is also non-zero,
where $\sum_{l=i_1}^j m_l \le p\le \sum_{l=i_1}^{j+1} m_l-1$,
$q = \sum_{l=i_1}^j m_l$, $i_1\le j\le i_2-2$.
By Proposition~\ref{prop:Sch},
every $(\sum_{l=i_1}^{i_2-1}m_l-1)\times (\sum_{l=i_1}^{i_2-1}m_l-1)$ submatrix of $A_{111}$ is non-singular.
Hence no  entry of $\tilde c $ is zero.
Consequently, there is some $c= (0,\ldots, 0, \tilde c^t, 1, \ldots, 1)^t \in\bbR^{N_{0,k}}$,
where the last $\sum_{l=i_2}^{k-1}m_l$ entries of $c$ are equal to $1$,
such that
\[
  A_1 c = 0.
\]
Again, we assume that (\ref{eqn:cc1}) is true.

Let $f_1$ and $f_2$ be defined by (\ref{eqn:fg}).
Since none of the first $\sum_{l=1}^{i_2}m_l$ entries of $c'$ is zero,
for $1\le n\le i_2-1$,
there exists some index pair $(i,l)$ such that
$(t_{n-1}, t_{n+1}) \subset  \supp Q_{i,l}$ and
$c_{i,l} \pm c'_{i,l} \ne 0$.
Similarly, for $i_1+1\le n\le k-1$,
there exists some index pair $(i,l)$ such that
$(t_{n-1}, t_{n+1}) \subset  \supp Q_{i,l}$ and
$c_{i,l} \pm c'_{i,l} \ne 0$.
Since $i_1+1\le i_2-1$, we see from Lemma~\ref{Lem:sep} that
neither $f_1$ nor $f_2$ is separable.
With similar arguments as before we get a contradiction. Hence (\ref{eqn:PR:4}) is true.

{\color{blue}
Next we prove (\ref{eqn:PR:2}).}
Assume on the contrary that (\ref{eqn:PR:2}) is false.
Since (\ref{eqn:PR:2}) is true for $n=1$,
there is some $n_0\ge 2$ such that
$\#(E\cap [t_0, t_{n_0})) \le m+m_{n_0}-1+\sum_{l=1}^{n_0-1}2m_l$
 and $\#(E\cap [t_0, t_n)) \ge m+m_n+\sum_{l=1}^{n-1}2m_l$  for $1\le n\le n_0-1$.
By (\ref{eqn:PR:4}), $\#(E\cap (t_{n_0-1}, t_{n_0}) ) \ge m_{n_0-1} + m_{n_0} - 1$.
Hence $\#(E\cap [t_0, t_{n_0})) = m+m_{n_0}-1+\sum_{l=1}^{n_0-1}2m_l$
and $t_{n_0-1}\not\in E$.

As in the previous  arguments, we can split $E$ into two subsequences $E_1$ and $E_2$ which satisfy (\ref{eqn:E12:1})
- (\ref{eqn:E12:4}) with $i_2$ being replaced by $n_0$.
Define $A_1$ and $A_2$ by (\ref{eqn:A1}) and (\ref{eqn:A2}), respectively.
Then
 there exists
some  $ c' \in \bbR^{N_{0,k}}$, for which none of the first $\sum_{l=1}^{n_0}m_l$ entries is zero and all the rest entries are zeros, such that
\[
  A_2 c' = 0.
\]

%Since $E_1\subset [t_0, t_{n_0}]$,
%the last $\sum_{l=n_0}^{k-1} 2m_l$ columns of $A_1$ are zeros.
On the other hand, since $\#E_1 = m+\sum_{l=1}^{n_0-1}m_l$,
%and $\#(E\cap(t_{n_0-1}, t_{n_0})) \ge  m_{n_0-1}$,
%we have
%$\#(E_1\cap [t_0, t_{n_0-1}]) \le  m+\sum_{l=1}^{n_0-2}m_l$.
by Proposition~\ref{prop:local sampling}, $E_1$ is not a linear sampling sequence for $\Vm|_{[t_0, t_{n_0}]}$.
Hence there is some $c\in\bbR^{N_{0,k}}\setminus \{0\}$, for which the last $\sum_{l=n_0}^{k-1} m_l$ entries are zeros, such that
\[
  A_1 c = 0.
\]
Again, we assume that (\ref{eqn:cc1}) holds
and define $f_1$ and $f_2$ by (\ref{eqn:fg}).
To show that (\ref{eqn:PR:2}) is true, it suffices to prove that both $f_1$ and $f_2$ are nonseparable.

Assume that $f_1$ is separable.
Observe that for the first $\sum_{l=1}^{n_0} m_l$ index pairs $(i,l)$,
$c_{i,l} + c'_{i,l}\ne0$.
By Lemma~\ref{Lem:sep}, there is some $p\ge n_0$ such that
$c_{i,l} + c'_{i,l} = 0$ whenever $(t_{p-1}, t_{p+1}) \subset \supp Q_{i,l}$.
Moreover, there is some $(i,l)$ with $i\ge p$ such that
$c_{i,l} + c'_{i,l} \ne 0$, which is impossible since $c_{i,l}=c'_{i,l}=0$ whenever $i\ge n_0$.
Similarly we can prove that $f_2$ is non-separable. This proves (\ref{eqn:PR:2}).
And  (\ref{eqn:PR:3}) can be proved similarly.

\textcolor{blue}{Finally, we prove (\ref{eqn:PR:1})}.
Assume on the contrary that $\#E \le 2m + \sum_{l=1}^{k-1}2m_l$.
To get a contradiction, we only need to consider the case of
$\#E = 2m + \sum_{l=1}^{k-1}2m_l$.

Take some $E_1\subset E$ such that
$\#E_1 =  m + \sum_{l=1}^{k-1} m_l$ and
\begin{equation}\label{eqn:s:1}
\#(E_1\cap (t_n,t_{n+1}))\ge   m_{n+1}, \qquad 0\le n\le k-1.
\end{equation}
We see from (\ref{eqn:PR:4}) that such $E_1$ exists.
Let $E_2=E\setminus E_1$.
Define $A_1$ and $A_2$ by (\ref{eqn:A1}) and (\ref{eqn:A2}), respectively.
Since $\rank(A_1)< N_{0,k}$,
there is some $c\in \bbR^{N_{0,k}}\setminus\{0\}$ such that
\[
  A_1 c = 0.
\]
We conclude that the function $g = \sum_{(i,l)\in I_{0,k}} c_{i,l}Q_{i,l}$ is not separable.
That is, there is not an integer $n\in [1,k-1]$ which satisfies the following two conditions,
\begin{enumerate}
\item[(P1)]  $c_{i,l}=0$ whenever $(t_{n-1}, t_{n+1})\subset \supp Q_{i,l}$,
\item[(P2)] there exist index pairs $(i_1,l_1)$, $(i_2,l_2)\in I_{0,k}$
such that $\supp Q_{i_1,l_1} \subset (-\infty, t_n)$,
$\supp Q_{i_2,l_2} \subset (t_n, \infty)$,
and $c_{i_p,l_p}\ne 0$ for $p=1,2$.
\end{enumerate}

Denote $r_n = \#(E_1\cap [t_{n-1},t_n))$.
We see from the choice of $E_1$ that $A_1$ is of the following structure,
\[
A_1 = \begin{pmatrix}
A_{1,1} \\
\vdots \\
A_{k,1}
\end{pmatrix},
\]
where $A_{n,1}$ has $r_n \ge m_n$ rows
and for each row, the first $\sum_{l=1}^{n-1} m_l$ entries are zeros,
the next $m$  or $m+1$ entries are non-zeros (it might be $m$ only for the first row of $A_{n,1}$), and the rest are zeros.
In other words, $A_1$ is of the following structure,

\begin{center}
\begin{tikzpicture}[scale=0.7]
\draw (0,0) rectangle (4,1);
\draw (1,-1.5) rectangle (5,0);
\draw (3,-2.3) rectangle (7,-1.5);
\draw (4,-3) node {$\cdots$};
\draw (5,-5) rectangle (9,-4);
\draw (-2,0.5) node {$A_{1,1}:$};
\draw (-2,-0.8) node {$A_{2,1}:$};
\draw (-2,-2) node {$A_{3,1}:$};
\draw (-2,-4.5) node {$A_{k,1}:$};
\draw (2,0.5) node {$\tilde A_{1,1}$};
\draw (3,-0.8) node {$\tilde A_{2,1}$};
\draw (5,-2) node {$\tilde A_{3,1}$};
\draw (7,-4.5) node {$\tilde A_{k,1}$};
\end{tikzpicture}
\end{center}
where $A_{n,1} = (0 \ \tilde A_{n,1}\ 0)$ and $\tilde A_{n,1}$ is an
$r_n \times (m+1)$ matrix.
Since $A_1 c=0$, we have
\begin{equation}\label{eqn:cn}
   \tilde A_{n,1} c^{(n)} = 0,
\end{equation}
where  $c^{(n)}\in\bbR^{m+1}$ whose entries consist  of $c_{i,l}$
for which $(t_{n-1},t_n) \subset \supp Q_{i,l}$.

Assume that  there exists an integer $n\in [1,k-1]$ such that $c_{i,l}=0$
whenever $(t_{n-1}, t_{n+1})\subset \supp Q_{i,l}$.
The the last $(m+1-m_n)$ entries of $c^{(n)}$ are zeros.
On the other hand, we see from Proposition~\ref{prop:Sch}
that the submatrix of $\tilde A_{n,1}$ consisting of the first $m_n$ columns is of rank $m_n$.
Now it follows from (\ref{eqn:cn}) that $c^{(n)}=0$. Consequently,
$c_{i,l}=0$ whenever $(t_{n-1}, t_n)\subset \supp Q_{i,l}$.
Therefore,
$c_{i,l}=0$
whenever $(t_{n-2}, t_{n})\subset \supp Q_{i,l}$.
By induction, we have
\[
  c_{i,l}=0,\qquad \mathrm{if}\quad \supp Q_{i,l}\cap  (t_0,t_n)\ne \emptyset.
\]
In other words,  (P2)  is false. Hence $g$ is nonseparable.

On the other hand, since $\#(E\setminus E_1) = \#E_1 < N_{0,k} $, the equation
\[
  A_2 c' = 0
\]
has a non-zero solution.
Again, we assume that (\ref{eqn:cc1}) holds. With similar arguments we   get a contradiction.
This completes the proof of the necessity.
\end{proof}

To prove the sufficiency of Theorem~\ref{Thm:phaseless:local}, we first present some  preliminary results.

\begin{Lemma}\label{Lem:L3a}
Let $p_1<q_1\le p_2<q_2$ be integers and $E\subset [p_1, q_2]$ be a sequence consisting of distinct points such that
\[
  \# (E\cap(t_{n_1}, t_{n_2}))      \ge m_{n_1} + m_{n_2}-1+ \sum_{l=n_1+1}^{n_2-1} 2 m_l
\]
whenever  $p_1\le n_1<n_2\le q_2$.
Suppose that  $E_1\cup E_2=E$, $E_1\cap  E_2=\emptyset $ and  that
$\#(E_i\cap[t_{p_i},t_{q_i}])\ge m+1+\sum_{l=p_i+1}^{q_i-1} m_l$, $i=1,2$.
Then there exist integers
$p'_1<q'_1=p'_2<q'_2$ in $[p_1,q_2]$ such  that
$E_i\cap[t_{p'_i}, t_{q'_i}]$  is a linear sampling sequence for   $\Vm|_{[t_{p'_i}, t_{q'_i}]}$, $i=1,2$.
\end{Lemma}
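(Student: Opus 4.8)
The plan is to reduce everything to Lemma~\ref{Lem:samp} and then to bridge the gap between the two intervals it produces. First I would apply Lemma~\ref{Lem:samp} to $E_1$ on $[t_{p_1},t_{q_1}]$ and to $E_2$ on $[t_{p_2},t_{q_2}]$; the two cardinality hypotheses $\#(E_i\cap[t_{p_i},t_{q_i}])\ge m+1+\sum_{l=p_i+1}^{q_i-1}m_l$ are exactly the thresholds that lemma requires. This yields integers $a_1<b_1$ in $[p_1,q_1]$ and $a_2<b_2$ in $[p_2,q_2]$ such that $E_1\cap[t_{a_1},t_{b_1}]$ is a linear sampling sequence for $\Vm|_{[t_{a_1},t_{b_1}]}$ and $E_2\cap[t_{a_2},t_{b_2}]$ is one for $\Vm|_{[t_{a_2},t_{b_2}]}$. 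Since $b_1\le q_1\le p_2\le a_2$, the two intervals are ordered from left to right, and if $b_1=a_2$ the proof is already finished by taking $p'_1=a_1$, $q'_1=p'_2=b_1=a_2$, $q'_2=b_2$.

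The remaining case is $b_1<a_2$, where there is a genuine gap $(t_{b_1},t_{a_2})$ to be closed. The only hypothesis controlling this gap is the cluster bound $\#(E\cap(t_{n_1},t_{n_2}))\ge m_{n_1}+m_{n_2}-1+\sum_{l=n_1+1}^{n_2-1}2m_l$ for $E=E_1\cup E_2$, whose crucial feature is the factor $2$ in front of the interior multiplicities. This doubling is precisely the surplus needed to drive two sampling sequences across the gap at once: one by extending $E_1\cap[t_{a_1},t_{b_1}]$ to the right and one by extending $E_2\cap[t_{a_2},t_{b_2}]$ to the left, so that they meet at a common knot $t_c$ with $b_1\le c\le a_2$. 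My aim is then to produce such a cut point $c$ and to set $p'_1=a_1$, $q'_1=p'_2=c$, $q'_2=b_2$.

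To locate $c$ I would take it to be the largest integer in $[b_1,a_2]$ for which $E_1\cap[t_{a_1},t_c]$ is still a linear sampling sequence on $[t_{a_1},t_c]$ (the value $c=b_1$ is admissible, so the set is nonempty). If this maximum equals $a_2$ we are done as above. Otherwise $E_1\cap[t_{a_1},t_{c+1}]$ fails to be a linear sampling sequence, so by Proposition~\ref{prop:local sampling} one of its four counting inequalities breaks; because the inequalities indexed by knots $\le t_c$ are inherited from the sampling property on $[t_{a_1},t_c]$, the broken one must be a total, right-hand, or straddling interior inequality that reaches the new knot $t_{c+1}$. In each case the failure records a deficit of $E_1$-points in a sub-interval abutting $t_{c+1}$, and the doubled cluster bound then forces $E_2$ to carry the complementary points there. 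Combining this with the sampling property of $E_2\cap[t_{a_2},t_{b_2}]$, I would verify through Proposition~\ref{prop:local sampling} that $E_2\cap[t_c,t_{b_2}]$ is a linear sampling sequence on $[t_c,t_{b_2}]$, which finishes the construction.

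The main obstacle is exactly this last verification. Turning a single failed inequality for the rightward extension of $E_1$ into the full list of total, left-hand, right-hand, and interior inequalities for $E_2$ on $[t_c,t_{b_2}]$ requires careful bookkeeping: one must track the multiplicities $m_l$ across the gap, respect the open/closed distinction at the knots $t_c$ and $t_{a_2}$, and confirm that the factor $2$ in the cluster bound is sufficient not merely globally but \emph{locally} on every sub-interval straddling $t_c$, so that however the gap points are split between $E_1$ and $E_2$, the side declared to be a sampling sequence never runs short (possibly re-selecting $c$ if $E_1$ remains rich further right in the gap). It is the invertibility criterion of Proposition~\ref{prop:Sch}, propagated through Proposition~\ref{prop:local sampling}, that ultimately certifies each extended sequence, and I expect the counting to in fact yield the stronger fact that some admissible $c$ makes both extensions succeed simultaneously.
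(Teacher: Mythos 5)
Your first step (two applications of Lemma~\ref{Lem:samp} to get disjoint, ordered sampling intervals $[t_{a_1},t_{b_1}]$ for $E_1$ and $[t_{a_2},t_{b_2}]$ for $E_2$, and the observation that only the gap $b_1<a_2$ needs work) matches the paper. The gap is in how you close that gap. You freeze the left endpoint at $a_1$, take the largest $c$ with $E_1\cap[t_{a_1},t_c]$ a linear sampling sequence, and then want to deduce from the \emph{single} inequality of Proposition~\ref{prop:local sampling} that fails for $E_1\cap[t_{a_1},t_{c+1}]$ that $E_2\cap[t_c,t_{b_2}]$ satisfies \emph{all} of the inequalities of Proposition~\ref{prop:local sampling} on $[t_c,t_{b_2}]$. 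That deduction does not follow. One failed inequality records a deficit of $E_1$ on one sub-interval abutting $t_{c+1}$, hence (via the doubled cluster bound) a surplus of $E_2$ there; but the sampling conditions for $E_2$ on $[t_c,t_{b_2}]$ demand lower bounds on $\#(E_2\cap(t_i,t_j))$ and on $\#(E_2\cap[t_c,t_i))$ for \emph{every} sub-interval meeting the gap, and nothing prevents $E_1$ from hoarding points elsewhere in $(t_{c+1},t_{a_2})$ (say, no $E_1$-points in $(t_c,t_{c+1})$ but many in $(t_{c+2},t_{c+3})$) in a configuration that neither lets $E_1$ extend past $t_c$ nor leaves $E_2$ enough points there. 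You acknowledge this yourself (``the main obstacle is exactly this last verification \dots I expect the counting to in fact yield \dots''), so the decisive step is asserted, not proved, and in the one-sided form you state it I do not believe it is true.

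The paper avoids this by making \emph{both} cut points extremal with \emph{free} other endpoints: $q'_1$ is the maximum of all $q\le p_2$ such that $E_1\cap[t_p,t_q]$ is a linear sampling sequence for \emph{some} $p<q$, and $p'_2$ is the symmetric minimum for $E_2$ on $[q'_1,q_2]$; one then assumes $q'_1<p'_2$ and derives a contradiction. The cluster bound gives one of the $E_i$ (say $E_1$) at least $\sum_{l=q'_1}^{p'_2-1}m_l$ points in the gap; an auxiliary index $n_0=\min\{n>q'_1:\,\#(E_1\cap(t_{q'_1},t_n))\ge\sum_{l=q'_1}^{n-1}m_l\}$ is introduced, the total and left-hand conditions for $E_1\cap[t_{p'_1},t_{n_0}]$ are checked directly, and each remaining failure mode of Proposition~\ref{prop:local sampling} hands back, via Lemma~\ref{Lem:samp}, a new $E_1$-sampling interval reaching strictly past $t_{q'_1}$ --- contradicting the maximality of $q'_1$. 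Note that this contradiction mechanism essentially requires the left endpoint to be free, since Lemma~\ref{Lem:samp} returns an interval with an uncontrolled left endpoint; your choice of fixing it at $a_1$ would break that step as well. To repair your argument you would need to replace the claim ``$E_2$ works on all of $[t_c,t_{b_2}]$'' by this two-sided extremal/contradiction scheme (or supply the iterative re-selection you allude to in full detail).
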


\begin{proof}
By Lemma~\ref{Lem:samp},
for $i=1,2$,
there exist integers $r_i, s_i\in [p_i, q_i]$ such that
$E_i\cap[t_{r_i}, r_{s_i}]$ is a linear sampling sequence for
 $\Vm|_{[t_{r_i}, t_{s_i}]}$.
Without loss of generality, we assume that $(r_i,s_i)= (p_i,q_i)$.

Recall that $q_1\le p_2$.
Let $(p'_1,q'_1)$ be a pair of integers such that
$E_1\cap [t_{p'_1},t_{q'_1}]$ is a linear sampling sequence for
$\Vm|_{[t_{p'_1},t_{q'_1}]}$ and
$q'_1$ is the maximum of all integers
$q\in [p_1, p_2]$ for which
there is some $p\in [p_1,q)$ such that $E_1\cap [t_p,t_q]$ is a linear sampling sequence for
$\Vm|_{[t_p,t_q]}$.
And Let $(p'_2,q'_2)$ be a pair of integers such that
$E_2\cap [t_{p'_2},t_{q'_2}]$ is a linear sampling sequence for
$\Vm|_{[t_{p'_2},t_{q'_2}]}$ and
 $p'_2$ is the minimum of all integers
$p\in [q'_1, q_2]$ for which there is some integer $q\in (p, q_2]$ such that $E_2\cap [t_p,t_q]$ is a linear sampling sequence for
$\Vm|_{[t_p,t_q]}$.
Then we have $q_1 \le q'_1\le  p'_2\le p_2$.

To complete the proof, it suffices to show that $q'_1=p'_2$.

Assume on the contrary that $q'_1<p'_2$.
Note that
\[
  \#(E\cap (q'_1, p'_2)) \ge  m_{q'_1}+m_{p'_2}-1+\sum_{l=q'_1+1}^{p'_2-1} 2m_l.
\]
We have either
\begin{equation}\label{eqn:pq1}
  \#(E_1\cap (q'_1, p'_2)) \ge  \sum_{l=q'_1}^{p'_2-1}m_l
\end{equation}
or
\begin{equation}\label{eqn:pq2}
  \#(E_2\cap (q'_1, p'_2)) \ge  \sum_{l=q'_1+1}^{p'_2}m_l.
\end{equation}
Without loss of generality, we assume that (\ref{eqn:pq1}) is true.
Let
\begin{equation}\label{eqn:dj2}
n_0 = \min\{ n>q'_1:\, \#(E_1\cap (t_{q'_1}, t_n)) \ge  \sum_{l=q'_1}^{n-1} m_l   \}.
\end{equation}
Then we have $q'_1 < n_0 \le p'_2$. For $q'_1<n<n_0$, since
\[
   \#(E_1\cap (t_{q'_1}, t_n)) \le  \sum_{l=q'_1}^{n-1} m_l -1 ,
\]
we have
\[
   \#(E_1\cap [t_n, t_{n_0})) \ge  \sum_{l=n}^{n_0-1} m_l+1.
\]
Hence for $q'_1 \le n<n_0$,
\begin{equation}\label{eqn:j2j}
   \#(E_1\cap (t_n, t_{n_0})) \ge  \sum_{l=n}^{n_0-1} m_l.
\end{equation}
It follows that
\begin{eqnarray*}
 \#(E_1\cap [t_{p'_1}, t_{n_0}])
   &= &  \#(E_1\cap [t_{p'_1}, t_{q'_1}]) +  \#(E_1\cap (t_{q'_1}, t_{n_0}])  \\
   &\ge& m + 1 + \sum_{l=p'_1+1}^{n_0-1} m_l.
\end{eqnarray*}

On the other hand, since $E_1\cap[t_{p'_1}, t_{q'_1}]$ is a linear sampling sequence for $\Vm|_{[t_{p'_1}, t_{q'_1}]}$,
for $p'_1\le n < q'_1$, we have
\begin{align*}
   \#(E_1\cap (t_n, t_{n_0})) &=    \#(E_1\cap (t_n, t_{q'_1}])  +    \#(E_1\cap (t_{q'_1}, t_{n_0}))  \\
   &\ge \sum_{l=n}^{n_0-1} m_l.
\end{align*}
In other words, (\ref{eqn:j2j}) is true for all $p'_1 \le n   < n_0$.

We see from the choice of $q'_1$ that $E_1\cap [t_{p'_1}, t_{n_0}]$ is not a linear sampling sequence
for $\Vm|_{[t_{p'_1}, t_{n_0}]}$.
By Proposition~\ref{prop:local sampling}, there are two cases.

(a)\,\, There exist integers $n_1,n_2\in [p'_1, n_0]$ such that $n_1<n_2$ and
$\#(E_1\cap(t_{n_1}, t_{n_2})) \le  \sum_{l=n_1}^{n_2} m_l-m-2$.

Recall that $E_1\cap [t_{p'_1}, t_{q'_1}]$ is a linear sampling sequence
for $\Vm|_{[t_{p'_1}, t_{q'_1}]}$.
We have $n_2>q'_1$.
First, we assume that $n_1 < q'_1$.
Since $\#(E_1\cap(t_{n_1}, t_{q'_1}]) \ge \sum_{l=n_1}^{q'_1-1} m_l $,
we have
\[
  \#(E_1\cap(t_{q'_1}, t_{n_2})) \le \sum_{l=q'_1}^{n_2} m_l-m-2.
\]
It follows from (\ref{eqn:j2j}) that
\[
  \# (E_1\cap (t_{n_2}, t_{n_0})) \ge  \sum_{l=n_2+1}^{n_0-1} m_l+m+1.
\]
By Lemma~\ref{Lem:samp}, there exist integers
$n'_1, n'_2\in [n_2, n_0]$  such that $n'_1<n'_2$ and   $E_1\cap [t_{n'_1}, t_{n'_2}]$ is linear sampling sequence for
$\Vm|_{[t_{n'_1}, t_{n'_2}]}$, which contradicts with the choice of $q'_1$.

Next we consider the case of $n_1\ge q'_1$.
We see from (\ref{eqn:dj2}) that
\begin{eqnarray*}
\#(E_1\cap (t_{q'_1},t_{n_2}))
&=&   \#(E_1\cap (t_{q'_1},t_{n_1})) + \#(E_1\cap [t_{n_1},t_{n_2}))\\
&\le& \sum_{l=q'_1}^{n_2} m_l - m -2.
\end{eqnarray*}
Similar to the previous case we get a contradiction.

(b).\,\,  There exists some integer $n\in [p'_1+1, n_0]$ such that
$\#(E_1\cap [t_{p'_1}, t_n )) \le \sum_{l=p'_1+1}^n m_l-1$.

In this case, we have $n>q'_1$ and
\begin{eqnarray*}
\#(E_1\cap(t_{q'_1},t_n)) &=& \#(E_1\cap[t_{p'_1},t_n)) - \#(E_1\cap[t_{p'_1}, t_{q'_1}]) \\
&\le& \sum_{l=q'_1}^{n} m_l - m -2.
\end{eqnarray*}
Again,
similar to the previous case we get a contradiction.  This completes the proof.
\end{proof}

The following is a simple application of Lemma~\ref{Lem:L3a}.

\begin{Lemma}\label{Lem:L3}
Let $E\subset [t_0, t_k]$ be a sequence of distinct points which meets (\ref{eqn:PR:1}) - (\ref{eqn:PR:4}).
Suppose that  $E_1\cup E_2=E$, $E_1\cap  E_2=\emptyset $, $\#E_1\le  \# E_2$, and that  $E_2$  is not a linear sampling sequence for
$\Vmk$.
Then there exist integers
 $p'_1,q'_1,p'_2,q'_2\in [0,k]$ such that
 $p'_1<q'_1$, $p'_2<q'_2$,  $[p'_1, q'_1]$ and $[p'_2, q'_2]$ have and only have one common point,
 and
$E_i\cap[p'_i, q'_i]$  is a linear sampling sequence for   $\Vm|_{[t_{p'_i}, t_{q'_i}]}$, $i=1,2$.
\end{Lemma}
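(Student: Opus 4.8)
The plan is to reduce the statement to Lemma~\ref{Lem:L3a} by exhibiting two adjacent integer intervals, one on which $E_1$ carries enough points to contain a linear sampling sequence and a neighbouring one on which $E_2$ does. The starting point is a cardinality bound. Since $E$ satisfies (\ref{eqn:PR:1}), $\#E \ge 2m+1+\sum_{l=1}^{k-1}2m_l = 2N_{0,k}-1$, and together with $\#E_1\le\#E_2$ this forces $\#E_2 \ge N_{0,k} = m+1+\sum_{l=1}^{k-1}m_l$. Thus $E_2$ already meets the cardinality requirement of Proposition~\ref{prop:local sampling}; since by hypothesis $E_2$ is not a linear sampling sequence for $\Vmk$, it must violate one of the three distribution conditions there, and I would organize the argument around which one fails. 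Throughout, the hypothesis of Lemma~\ref{Lem:L3a} on the combined sequence is automatic: $E$ satisfies (\ref{eqn:PR:4}) on all of $[t_0,t_k]$, hence on every subwindow $[t_{p_1},t_{q_2}]$.

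Suppose first that the left-tail condition fails: there is an index $n$ with $\#(E_2\cap[t_0,t_n)) < \sum_{l=1}^n m_l$. Using $m_n\le m$ and $\#E_2\ge N_{0,k}$ one checks that $n=k$ is impossible, so $0<n<k$. Subtracting the deficiency of $E_2$ from the lower bound (\ref{eqn:PR:2}) for $E$ gives $\#(E_1\cap[t_0,t_n))\ge m+1+\sum_{l=1}^{n-1}m_l$, so $E_1$ is rich on $[t_0,t_n]$; complementarily, $\#(E_2\cap[t_n,t_k]) = \#E_2 - \#(E_2\cap[t_0,t_n)) \ge m+1+\sum_{l=n+1}^{k-1}m_l$, so $E_2$ is rich on $[t_n,t_k]$. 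These intervals meet only at $t_n$, so Lemma~\ref{Lem:L3a} (with left sequence $E_1$, right sequence $E_2$, and $p_1=0<q_1=n=p_2<q_2=k$) produces the required touching pair. The right-tail case is symmetric, with the roles of $E_1$ and $E_2$ exchanged.

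The remaining case is when $E_2$ satisfies both tail conditions but violates the interior-gap condition: there are $i<j$ with $\#(E_2\cap(t_i,t_j)) \le \sum_{l=i}^j m_l -m-2$. Because both tail conditions hold for $E_2$, a short computation using $m_0,m_k\le m$ shows that any such window must have $1\le i<j\le k-1$, so both flanks $[t_0,t_i]$ and $[t_j,t_k]$ are nondegenerate. Subtracting the deficiency from (\ref{eqn:PR:4}) gives $\#(E_1\cap(t_i,t_j))\ge m+1+\sum_{l=i+1}^{j-1}m_l$, so $E_1$ is rich on $[t_i,t_j]$. The decisive step, which I expect to be the main obstacle, is to show that $E_2$ is rich on at least one flank. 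Combining $\#E_2\ge N_{0,k}$ with the deficiency yields
\[
  \#(E_2\cap[t_0,t_i]) + \#(E_2\cap[t_j,t_k]) \ge 2m+3+\sum_{l=1}^{i-1}m_l+\sum_{l=j+1}^{k-1}m_l,
\]
which is exactly one more than the sum of the two richness thresholds $m+1+\sum_{l=1}^{i-1}m_l$ and $m+1+\sum_{l=j+1}^{k-1}m_l$; hence at least one flank meets its threshold.

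Whichever flank is rich, it is adjacent to $E_1$'s interval $[t_i,t_j]$, so Lemma~\ref{Lem:L3a} applies with the appropriate left/right labelling (and, since it allows $q_1\le p_2$, it closes any gap between the two rich intervals). In every case it delivers integers $p'_1<q'_1=p'_2<q'_2$ in $[0,k]$ with $E_i\cap[t_{p'_i},t_{q'_i}]$ a linear sampling sequence for $\Vm|_{[t_{p'_i},t_{q'_i}]}$, $i=1,2$. Since $q'_1=p'_2$, the intervals $[t_{p'_1},t_{q'_1}]$ and $[t_{p'_2},t_{q'_2}]$ share exactly the point $t_{q'_1}$, which is precisely the assertion of the lemma.
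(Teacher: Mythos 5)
Your proposal is correct and follows essentially the same route as the paper's own proof: the identical case analysis via Proposition~\ref{prop:local sampling} on how $E_2$ fails to be a linear sampling sequence, the same counting against (\ref{eqn:PR:2})--(\ref{eqn:PR:4}) to show $E_1$ is rich on the complementary window (including the ``at least one flank is rich'' estimate in the interior-gap case), and the same reduction to Lemma~\ref{Lem:L3a}. The only differences are cosmetic: you feed the cardinality bounds into Lemma~\ref{Lem:L3a} directly instead of first extracting linear sampling subintervals via Lemma~\ref{Lem:samp}, and you spell out a few nondegeneracy checks that the paper leaves implicit.
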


\begin{proof}
Since $\# E_2 \ge \#E_1$, we see from (\ref{eqn:PR:1}) that
\begin{equation}\label{eqn:aba:e1}
\#E_2 \ge m+1+\sum_{l=1}^{k-1} m_l.
\end{equation}
Note that $E_2$  is not a linear sampling sequence for $\Vmk$.
By Proposition~\ref{prop:local sampling}, there are three cases.

(i) There is some integer $i\in [1, k]$ such that $\#(E_2\cap[t_0, t_i)) \le \sum_{l=1}^i m_l -1 $.

In this case, we see from (\ref{eqn:PR:2}) that
\begin{eqnarray*}
 \#(E_1\cap[t_0, t_i)) &=& \#(E\cap[t_0, t_i)) - \#(E_2\cap[t_0, t_i)) \\
   &\ge& m+1+\sum_{l=1}^{i-1}m_l.
\end{eqnarray*}
By Lemma~\ref{Lem:samp}, there exist integers $p_1, q_1 \in [0,i]$ such that $p_1<q_1$ and
$E_1\cap [t_{p_1}, t_{q_1}]$ is a linear sampling sequence for $\Vm|_{[t_{p_1}, t_{q_1}]}$.

On the other hand, note that
\begin{eqnarray*}
 \#(E_2\cap[t_i, t_k]) &=& \#(E_2\cap[t_0, t_k]) - \#(E_2\cap[t_0, t_i)) \\
   &\ge& m+2+\sum_{l=i+1}^{k-1} m_l.
\end{eqnarray*}
Using Lemma~\ref{Lem:samp} again, we get some integers $p_2, q_2\in [i, k]$  such that $p_2<q_2$
and $E_2\cap [t_{p_2}, t_{q_2}]$ is a linear sampling sequence for $\Vm|_{[t_{p_2}, t_{q_2}]}$.
Now the conclusion follows from Lemma~\ref{Lem:L3a}.

(ii) There is some integer $i\in [0,k-1]$  such that $\#(E_2\cap(t_i, t_k]) \le \sum_{l=i}^{k-1} m_l-1$.

Similarly to the first case we can prove the conclusion.

(iii) There exist integers $i,j\in [0,k]$ such that $i<j$  and $\#(E_2\cap(t_i, t_j)) \le \sum_{l=i}^j m_l-m-2$.

In this case, we see from (\ref{eqn:PR:4}) that
\[
  \#(E_1\cap (t_i, t_j)) \ge m+1+\sum_{l=i+1}^{j-1} m_l.
\]
Using Lemma~\ref{Lem:samp} again, we  get integers $p_1, q_1 \in [i,j]$ such that $p_1<q_1$ and
$E_1\cap [t_{p_1}, t_{q_1}]$ is a linear sampling sequence for $\Vm|_{[t_{p_1}, t_{q_1}]}$.

On the other hand, we see from  (\ref{eqn:aba:e1}) that
either
\[
  \#(E_2\cap [t_0, t_i]) \ge m+1+\sum_{l=1}^{i-1} m_l
\]
or
\[
  \#(E_2\cap [t_j, t_k]) \ge m+1+\sum_{l=j+1}^{k-1} m_l.
\]
Consequently, there exist integers $p_2<q_2$ such that
$[p_2,q_2]\subset [0,i]$ or $[p_2,q_2]\subset [j,k]$
and
$E_2\cap [t_{p_2}, t_{q_2}]$ is a linear sampling sequence for $\Vm|_{[t_{p_2}, t_{q_2}]}$.
Again, the conclusion follows from Lemma~\ref{Lem:L3a}.
\end{proof}

The sufficiency  of Theorem~\ref{Thm:phaseless:local}   is a consequence of the following lemma, which says that if $f_1\pm f_2 \ne 0$
and $|f_1|$ and $|f_2|$ are identical on a sequence $E$ which satisfies (\ref{eqn:PR:1}) - (\ref{eqn:PR:4}),
then   $f_1$ and $f_2$ are  uniformly separable.

\begin{Lemma}\label{Lem:L4}
Suppose that $E=\{x_n:\, 1\le n\le N\}$ is a sequence of distinct points which satisfies (\ref{eqn:PR:1}) - (\ref{eqn:PR:4}).
Let $f_p = \sum_{(i,l)\in I_{0,k}} c^{(p)}_{i,l} Q_{i,l}\in \Vm$, $p=1,2$. Suppose that
\begin{equation}\label{eqn:f12i}
|f_1(x_n)| = |f_2(x_n)|, \qquad 1\le n\le N.
\end{equation}
Then $|f_1(x)|=|f_2(x)|$ on $[t_0, t_k]$. Moreover,
if $f_1\ne \pm f_2$,  then   $f_1$ and $f_2$ are uniformly separable in the sense that there is some integer $n_0\in (0,k)$ such that
$f_p|_{[t_0, t_{n_0}]}\ne 0$,
$f_p|_{[t_{n_0}, t_k]}\ne 0$,
and
\begin{equation}\label{eqn:cn0}
 c^{(p)}_{i,l}=0,\qquad \mathrm{if} \,\, (t_{n_0-1}, t_{n_0+1}) \subset \supp Q_{i,l}, \quad p=1,2.
\end{equation}
\end{Lemma}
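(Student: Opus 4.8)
The plan is to pass to $g=f_1-f_2$ and $h=f_1+f_2$, both of which lie in $\Vmk$ since $f_1,f_2$ do. As $f_1,f_2$ are real valued, $|f_1|^2-|f_2|^2=g\cdot h$, so hypothesis (\ref{eqn:f12i}) says precisely that $g(x_n)h(x_n)=0$ for every $x_n\in E$. I would prove the whole statement by strong induction on $k$. In the base case $k=1$ the functions $f_1,f_2$ are polynomials of degree $\le m$ on $[t_0,t_1]$, so $gh$ is a polynomial of degree $\le 2m$; condition (\ref{eqn:PR:1}) then supplies at least $2m+1$ zeros of $gh$ in $[t_0,t_1]$, forcing $gh\equiv0$ and hence $g\equiv0$ or $h\equiv0$, i.e. $f_1=\pm f_2$. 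Thus $|f_1|=|f_2|$ on $[t_0,t_1]$ and the separability clause is vacuous.

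For the inductive step take $k\ge2$ and assume the lemma for all smaller values. If $f_1=\pm f_2$ we are done, so suppose $g\not\equiv0$ and $h\not\equiv0$. Split $E=E_g\cup E_h$ disjointly, where $g$ vanishes on $E_g$ and $h$ on $E_h$ (ties assigned to $E_g$). If $E_g$ were a linear sampling sequence for $\Vmk$ then $g\equiv0$ by Proposition~\ref{prop:local sampling}, a contradiction, and likewise for $E_h$; hence neither set is a linear sampling sequence. Letting $E_2$ be the larger of $E_g,E_h$ and $E_1$ the smaller, Lemma~\ref{Lem:L3} produces integers $p_1'<q_1'=p_2'=:n_0<q_2'$ in $[0,k]$, so $n_0\in(0,k)$, for which $E_h\cap[t_{p_1'},t_{n_0}]$ and $E_g\cap[t_{n_0},t_{q_2'}]$ are linear sampling sequences for the corresponding restricted spline spaces (up to relabelling $g,h$). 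Since a function vanishing on a linear sampling sequence vanishes identically, this forces $h\equiv0$ on $[t_{p_1'},t_{n_0}]$ and $g\equiv0$ on $[t_{n_0},t_{q_2'}]$.

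I would then read these two identities through the basis of Lemma~\ref{Lem:spline:basis}: $h\equiv0$ on $[t_{p_1'},t_{n_0}]$ gives $c^{(1)}_{i,l}+c^{(2)}_{i,l}=0$ for every $Q_{i,l}$ whose support meets $(t_{p_1'},t_{n_0})$, and $g\equiv0$ on $[t_{n_0},t_{q_2'}]$ gives $c^{(1)}_{i,l}-c^{(2)}_{i,l}=0$ for every $Q_{i,l}$ whose support meets $(t_{n_0},t_{q_2'})$. Any B-spline with $(t_{n_0-1},t_{n_0+1})\subset\supp Q_{i,l}$ meets both intervals, so both combinations vanish and hence $c^{(1)}_{i,l}=c^{(2)}_{i,l}=0$; this is exactly (\ref{eqn:cn0}) at the point $n_0$. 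By the computation in the implication (ii)$\Rightarrow$(iii) of Lemma~\ref{Lem:sep}, (\ref{eqn:cn0}) yields a decomposition $f_p=f_p^{L}+f_p^{R}$ with $\supp f_p^{L}\subset[t_0,t_{n_0}]$ and $\supp f_p^{R}\subset[t_{n_0},t_k]$, so on $[t_0,t_{n_0}]$ we have $f_p=f_p^{L}$ and on $[t_{n_0},t_k]$ we have $f_p=f_p^{R}$.

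It remains to close the induction on the two shorter intervals $[t_0,t_{n_0}]$ and $[t_{n_0},t_k]$ by applying the inductive hypothesis to the pairs $f_1^{L},f_2^{L}$ and $f_1^{R},f_2^{R}$ with the restricted sample sets $E\cap[t_0,t_{n_0})$ and $E\cap(t_{n_0},t_k]$; combining the two half-conclusions with the split at $n_0$ then gives $|f_1|=|f_2|$ on all of $[t_0,t_k]$ together with the asserted uniform separability. I expect this closing step to be the main obstacle. The interior bounds (\ref{eqn:PR:4}) and the far-end bounds (\ref{eqn:PR:2}), (\ref{eqn:PR:3}) restrict verbatim to each half, but the point $t_{n_0}$, previously interior to $[t_0,t_k]$, becomes a \emph{boundary} of each half and there carries only the interior density of (\ref{eqn:PR:4}), which falls short of the boundary density (\ref{eqn:PR:3}) by $m-m_{n_0}+1$ points. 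The device that repairs this deficit is the information already in hand, namely that $f_1=-f_2$ throughout the whole subinterval $[t_{p_1'},t_{n_0}]$ abutting $t_{n_0}$ (and $f_1=f_2$ on $[t_{n_0},t_{q_2'}]$): the phaseless ambiguity is already resolved in a neighbourhood of the new endpoint, so the missing density is not needed there. Making this precise, using the maximality and minimality built into the choice of $n_0$ in Lemma~\ref{Lem:L3a} so that the restricted problems are genuinely governed by (\ref{eqn:PR:1})--(\ref{eqn:PR:4}) or else collapse to still shorter intervals, and verifying the non-vanishing $f_p|_{[t_0,t_{n_0}]},f_p|_{[t_{n_0},t_k]}\ne0$, is where the bookkeeping concentrates.
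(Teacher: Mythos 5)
Your proposal follows essentially the same route as the paper's proof: split $E$ according to the sign of $f_1\pm f_2$, invoke Lemma~\ref{Lem:L3} to obtain two abutting subintervals on which the two pieces are linear sampling sequences, deduce the coefficient vanishing (\ref{eqn:cn0}) at the junction $n_0=q_1'$, and recurse on the two halves. The boundary-density deficit you flag at $t_{n_0}$ is repaired in the paper exactly as you anticipate: one adjoins $m$ fresh points from $[t_{q_1-1},t_{q_1}]\setminus E$ (respectively $[t_{p_2},t_{p_2+1}]\setminus E$), where $f_1=-f_2$ (respectively $f_1=f_2$) is already known so the phaseless data there is free, making the augmented sets satisfy (\ref{eqn:PR:1})--(\ref{eqn:PR:4}) on each half; the non-vanishing of the restrictions is then settled by a short case analysis on where the nonzero coefficients of $f_1\pm f_2$ sit.
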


\begin{proof}
Suppose that $f_1\pm f_2\ne 0$.
Split $E$ into two subsequences $E_1$ and $E_2$ such that
\[
  E_1 = \{x_n:\, f_1(x_n) = -f_2(x_n)\}\quad \mathrm{and}\quad  E_2 = \{x_n:\, f_1(x_n) = f_2(x_n)\}.
\]
Since $\#E \ge 2m+1+\sum_{l=1}^{k-1} 2m_l$, without loss of generality, we assume that
$\# E_2 \ge m+1+\sum_{l=1}^{k-1} m_l$.

Since $f_1\ne f_2$ and they are identical on $E_2$,
$E_2$ is not a sampling sequence for $\Vmk$.
We see from   Proposition~\ref{prop:local sampling} that $k\ge 2$.
We prove the conclusion with induction over  $k$.

First, we consider the case of $k=2$.
Since $E_2$ is not a sampling sequence for $\Vmk$, we see from Proposition~\ref{prop:local sampling}
that either $\#(E_2\cap [t_0, t_1))\le m_1-1$ or $\#(E_2\cap (t_1, t_2]) \le m_1-1$.
Without loss of generality, assume that $\#(E_2\cap (t_1, t_2]) \le m_1-1$.

Since  $f_1 \pm f_2 \ne 0$, there are some $c, c'\in \bbR^{N_{0,k}}\setminus\{0\}$ such that
\begin{equation} \label{eqn:f12}
\left\{
\begin{aligned}
f_1(x) + f_2(x) &= \sum_{(i,l)\in I_{0,k}} c_{i,l}Q_{i,l}, \\
f_1(x) - f_2(x) &= \sum_{(i,l)\in I_{0,k}} c'_{i,l}Q_{i,l}.
\end{aligned}\right.
\end{equation}
Since  $\#(E_2\cap[t_0, t_1]) =
\#(E_2\cap[t_0, t_2]) - \#(E_2\cap(t_1, t_2])
 \ge 2+m$,
$E_2\cap[t_0, t_1]$ is a linear sampling sequence for $\Vm|_{[t_0, t_1]}$.
We see from the definition of $E_2$ that $f_1$ and $f_2$ are identical on $E_2$.
Hence they are identical on  $[t_0, t_1]$. Therefore,
\[
  c'_{i,l}=0,\qquad   \mathrm{if}\, (t_0,t_1)\subset \supp Q_{i,l}.
\]
But $f_1\ne  f_2$. Hence
there exist some $(i_1,l_1)\in I_{0,2}$ such that
$(t_0,t_1)\not\subset \supp Q_{i_1,l_1}$ and
$c'_{i_1,l_1}\ne 0$.

On the other hand, since    $\#(E_1\cap(t_1, t_2]) = \#(E\cap(t_1, t_2]) - \#(E_2\cap(t_1, t_2]) \ge m+1$,
$E_1\cap [t_1,t_2]$ is a linear sampling sequence for $\Vm|_{[t_1,t_2]}$.
Now we see from  $f_1(x_n) = -f_2(x_n)$ for $x_n\in E_1$ that
$f_1(x)=-f_2(x)$ on $[t_1, t_2]$ and
\[
  c_{i,l}=0,\qquad   \mathrm{if}\, (t_1,t_2)\subset \supp Q_{i,l}.
\]
But $f_1\ne -f_2$. Hence
there exist some $(i_2,l_2)\in I_{0,2}$ such that
$(t_1,t_2)\not\subset \supp Q_{i_2,l_2}$ and
$c_{i_2,l_2}\ne 0$.

Summing up the above arguments we get that
$ c_{i,l} =c'_{i,l} =0$ if $(t_0,t_2)\subset \supp Q_{i,l}$
and there exist $(i_1,l_1), (i_2,l_2)\in I_{0,k}$
such that $(t_0,t_1) \subset \supp Q_{i_2,l_2}$,
$(t_1,t_2) \subset \supp Q_{i_1,l_1}$,
and
\[
  c_{i_p, l_p} \pm c'_{i_p, l_p} \ne 0,\qquad p=1,2.
\]
By Lemma~\ref{Lem:sep}, both $f_1$ and $f_2$ are separable.
By setting $n_0=1$, we get the conclusion as desired.

Now suppose that  the conclusion is true for $2\le k\le k_0$, where $k_0\ge 2$.
Let us consider the case of $k=k_0+1$.

Recall that $E_2$ is not a linear sampling sequence for $\Vmk$.
We see from  Lemma~\ref{Lem:L3} that there exist integers
$p_1,q_1, p_2, q_2\in [0, k]$ such that
$p_1<q_1$, $p_2<q_2$,  $[p_1, q_1]$ and $[p_2, q_2]$ have and only have one common point,
 and
$E_1\cap[t_{p_1}, t_{q_1}]$  and
$E_2\cap[t_{p_2}, t_{q_2}]$
are linear sampling sequences for   $\Vm|_{[t_{p_1}, t_{q_1}]}$
and $\Vm|_{[t_{p_2}, t_{q_2}]}$, respectively.
Hence $f_1 = -f_2$ on $[t_{p_1}, t_{q_1}]$
and  $f_1 = f_2$ on $[t_{p_2}, t_{q_2}]$.
Without loss of generality, we assume that $q_1=p_2$.

Take some $F\subset [t_{q_1-1}, t_{q_1}]\setminus E$
and $F' \subset [t_{p_2}, t_{p_2+1}]\setminus E$
 such that $\#F=\#F' =m$.
Let $\tilde E = F \cup (E\cap [t_0, t_{q_1}])$
and $\tilde E' = F' \cup (E\cap [t_{p_2}, t_k])$.
Then (\ref{eqn:PR:1}) -   (\ref{eqn:PR:4}) are true if we replace $(E, t_0, t_k)$
by $(\tilde E, t_0, t_{q_1})$ or $(\tilde E', t_{p_2}, t_k)$.
Since both $q_1$ and $k-p_2$ are less than $k$,
we see from the inductive assumption that
$\tilde E $  and $\tilde E'$ are phaseless sampling sequences for
$\Vm|_{[t_0, t_{q_1}]}$
and $\Vm|_{[t_{p_2}, t_k]}$,  respectively.
Hence $|f_1|$ and $|f_2|$ are identical on $[t_0, t_k]= [t_0, t_{q_1}]\cup [t_{p_2}, t_k]$.

It remains to show that
$f_1$ and $f_2$ are uniformly separable.

Take some $c,c'\in\bbR^{N_{0,k}}$ such that (\ref{eqn:f12}) holds.
Since $E_1\cap [t_{p_1}, t_{q_1}]$ is a linear sampling sequence for $\Vm|_{[t_{p_1}, t_{q_1}]}$ and
$f_1(x_n) = -f_2(x_n)$ for $x_n\in E_1$, we have
\begin{equation}\label{eqn:cni}
c_{i,l}=0,\qquad \mathrm{if}\, (t_{p_1}, t_{q_1})\cap \supp Q_{i,l} \ne \emptyset.
\end{equation}
Similarly we get
\[
c'_{i,l}=0,\qquad \mathrm{if}\, (t_{p_2}, t_{q_2})\cap \supp Q_{i,l} \ne \emptyset.
\]
Hence
\begin{equation}\label{eqn:cc3}
c_{i,l}=c'_{i,l} = 0,\qquad  \mathrm{if}\,  (t_{q_1-1}, t_{p_2+1})\subset  \supp Q_{i,l}.
\end{equation}
It follows that (\ref{eqn:cn0}) is true for $n_0 = q_1$.

Note that $E$ is a linear sampling sequence for $\Vmk$. We conclude that
neither $f_1$ nor $f_2$ is zero.
Otherwise, we see from $|f_1(x_n)|=|f_2(x_n)|$ for $x_n\in E$ that both are zeros,
which contradicts with the hypothesis $f_1 \pm f_2\ne 0$.
Hence there are $(i_1,l_1), (i_2, l_2) \in I_{0,k}$ such that
$c_{i_1,l_1}+c'_{i_1,l_1} \ne 0$ and $c_{i_2,l_2} - c'_{i_2,l_2} \ne 0$.
Without loss  of generality, assume that $(i_1, l_1) \le (i_2, l_2)$.
That is, $i_1<i_2$ or $i_1=i_2$ and $l_1>l_2$.

We see from (\ref{eqn:cc3}) that $ (t_{n_0-1}, t_{n_0+1}) \not\subset \supp Q_{i_r, l_r}$, $r=1,2$.
There are three cases.

(i).\, $\supp Q_{i_1,l_1}\subset (-\infty, t_{n_0})$ and $\supp Q_{i_2,l_2} \subset (t_{n_0}, \infty )$.

By (\ref{eqn:f12}),  $f_1 = \sum_{(i,l)\in I_{0,k}} \frac{c_{i,l}+c'_{i,l}}{2} Q_{i,l}$.
Hence  $f_1|_{[t_0, t_{n_0}]}\ne 0$.
Since $E\cap [t_0, t_{n_0}]$ is a linear sampling sequence for $\Vm|_{[t_0, t_{n_0}]}$,
we see from (\ref{eqn:f12i}) that $f_2|_{[t_0, t_{n_0}]}\ne 0$.
Similarly we can prove that $f_i|_{[t_{n_0}, t_k]} \ne 0$, $i=1,2$.
Hence $f_1$ and $f_2$ are uniformly separable.

(ii).\,
For any $(i_1,l_1), (i_2, l_2) \in I_{0,k}$ with
$c_{i_1,l_1}+c'_{i_1,l_1} \ne 0$ and $c_{i_2,l_2} - c'_{i_2,l_2} \ne 0$,
both $Q_{i_1,l_1}$ and $Q_{i_2,l_2}$ are supported in $(-\infty, t_{n_0})$.

In this case, we have
\[
  c_{i,l}=c'_{i,l}=0,\qquad \mathrm{if}\, \supp Q_{i,l}\cap (t_{n_0}, \infty ) \ne \emptyset.
\]
Hence
\[
  \left\{
\begin{aligned}
f_1   + f_2  &= \sum_{(i,l)\in I_{0,n_0}} c_{i,l}Q_{i,l}, \\
f_1  - f_2 &= \sum_{(i,l)\in I_{0,n_0}} c'_{i,l}Q_{i,l}.
\end{aligned}\right.
\]
Let $y_1, \ldots, y_m$ in $(t_{n_0-1},t_{n_0})\setminus E$ be $m$ distinct points
and set $\tilde E = (E\cap[t_0, t_{n_0}])\cup \{y_n:\,1\le n\le m\}$.
Then $\tilde E$ meets (\ref{eqn:PR:1}) - (\ref{eqn:PR:4}) if we
replace $(E, k)$ by $(\tilde E, n_0)$.

Let $\tilde E_1 = (E_1 \cap [t_0, t_{n_0}])\cup \{y_n:\,1\le n\le m\}$
and $\tilde E_2 = E_2\cap [t_0, t_{n_0}]$.
Then we have $\tilde E = \tilde E_1 \cup \tilde E_2$ and $f_1(x_n) = f_2(x_n)$ for $x_n\in \tilde E_2$.

On the other hand, note that $E_1\cap [t_{p_1}, t_{q_1}]$ is a linear sampling sequence for $\Vm|_{[t_{p_1}, t_{q_1}]}$
and $f_1+f_2$ is identical to zero on $E_1$, we have $(f_1+f_2)|_{[t_{p_1},t_{q_1}]} = 0$.
Hence $f_1(y_n) + f_2(y_n) = 0$ for $1\le n\le m$.
Therefore $(f_1+f_2)|_{\tilde E_1}=0$.
Since $n_0=q_1 < q_2\le k$,  we see from the inductive assumption that
$f_1$ and $f_2$ are uniformly separable.

(iii).\,
For any $(i_1,l_1), (i_2, l_2) \in I_{0,k}$ with
$c_{i_1,l_1}+c'_{i_1,l_1} \ne 0$ and $c_{i_2,l_2} - c'_{i_2,l_2} \ne 0$,
both $Q_{i_1,l_1}$ and $Q_{i_2,l_2}$ are supported in $(t_{n_0}, \infty)$.

Similarly to the previous case we can prove that
$f_1$ and $f_2$ are uniformly separable.

By induction, the conclusion is true for any $k\ge 2$. This completes the proof.
\end{proof}

\section{Linear Phaseless Sampling in Complex Spline Spaces}

\subsection{Characterization of Linear Phaseless Sampling Sequences}

The main result in this section is the following characterization of linear
phaseless sampling sequences for $|\Wm|^2_{[t_0, t_k]}$,
where $\Wm$ is defined similarly as $\Vm$ but with complex coefficients.

For simplicity, we set $M_k := km + N_{0,k} = 2m+1+\sum_{l=1}^{k-1}(m+m_l)$.

\begin{Theorem}\label{thm:main:2}
Let $k\ge 0$  be an integer
and $E=\{x_i:\, 1\le i\le K\} \subset [t_0, t_k]$  be a sequence  consisting of distinct points.
Then
$E$ is a   linear sampling sequence for $|\Wm|^2_{[t_0, t_k]}$  if and only if it  satisfies the followings,
\begin{align}
\#  E                     &\ge M_k,                                                   \label{eqn:ab:1} \\
\# (E\cap [t_0, t_i))     &\ge  \sum_{l=1}^i (m+m_l),       \quad   1\le i\le k,    \label{eqn:ab:2} \\
\# (E\cap (t_i, t_k]) &\ge  \sum_{l=i}^{k-1} (m+m_l),       \quad   0\le i\le k-1,      \label{eqn:ab:3} \\
\# (E\cap(t_i, t_j))      &\ge  \sum_{l=i}^j (m+m_l) - 2m-1,     \quad   0\le i<j\le k.  \label{eqn:ab:4}
\end{align}
\end{Theorem}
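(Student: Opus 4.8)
The plan is to identify $|\Wm|^2_{[t_0,t_k]}$ with an honest linear spline space of degree $2m$, after which Theorem~\ref{thm:main:2} becomes a direct application of Proposition~\ref{prop:local sampling}. For $f=\sum c_{i,l}Q_{i,l}\in\Wm$ one has $|f|^2=\sum_{(i,l),(i',l')} c_{i,l}\overline{c_{i',l'}}\,Q_{i,l}Q_{i',l'}$, so every $|f|^2$ is a real combination of the products $Q_{i,l}Q_{i',l'}$. I would first record a polarization identity: since $Q_{i,l},Q_{i',l'}\in\Wm$ are real,
\[
Q_{i,l}Q_{i',l'}=\tfrac12\big(|Q_{i,l}+Q_{i',l'}|^2-|Q_{i,l}|^2-|Q_{i',l'}|^2\big)
\]
lies in $\mathrm{span}_{\bbR}|\Wm|^2$, and conversely each $|f|^2$ is a real combination of such products. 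Hence the real span of $|\Wm|^2_{[t_0,t_k]}$ equals the real span of $\{Q_{i,l}Q_{i',l'}\}$ restricted to $[t_0,t_k]$.

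The core step is to show that this common span is exactly $\tVmk$, the degree-$2m$ spline space whose knot $t_j$ carries multiplicity $m+m_j$. Containment is the routine direction: each $Q_{i,l}$ is a piecewise polynomial of degree $m$ that is $C^{m-m_j}$ across $t_j$, so a product of two of them is a piecewise polynomial of degree $2m$ that is still $C^{m-m_j}$ across $t_j$, i.e. an element of a degree-$2m$ spline space with knot multiplicity $2m-(m-m_j)=m+m_j$; the standard spline dimension formula then gives $\dim\tVmk=2m+1+\sum_{l=1}^{k-1}(m+m_l)=M_k$. The reverse inclusion---that the products actually span all of $\tVmk$---is the main obstacle. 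I would prove it by induction on $k$: on a single interval $[t_j,t_{j+1}]$ the $m+1$ nonvanishing B-splines restrict to a Bernstein-type basis of the degree-$m$ polynomials, whose pairwise products furnish a Bernstein basis of the degree-$2m$ polynomials (dimension $2m+1$), settling $k=1$; for the inductive step I would adjoin one interval and match the $C^{m-m_j}$ gluing conditions at the new knot, using the local linear independence of B-splines and the Schoenberg--Whitney mechanism of Proposition~\ref{prop:Sch} to upgrade local spanning to global spanning. Since both sides sit in $\tVmk$ and have the same dimension $M_k$, equality follows.

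With the identification $\mathrm{span}_{\bbR}|\Wm|^2_{[t_0,t_k]}=\tVmk$ in hand, I would transfer the sampling statement. If $E$ satisfies (\ref{eqn:ab:1})--(\ref{eqn:ab:4}), these are precisely the four inequalities of Proposition~\ref{prop:local sampling} applied to $\tVmk$, namely the degree-$2m$ space with multiplicities $m+m_l$ (note $m+m_l\le 2m$, so the proposition applies verbatim with $m$ replaced by $2m$ and $m_l$ by $m+m_l$). Hence there are functions $S_n$ with $g=\sum_n g(x_n)S_n$ for every $g\in\tVmk$; specializing to $g=|f|^2$ yields the reconstruction formula $|f|^2=\sum_n|f(x_n)|^2S_n$, so $E$ is a linear sampling sequence for $|\Wm|^2_{[t_0,t_k]}$. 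Conversely, if $E$ is such a linear sampling sequence, the reconstruction formula holds on the spanning set $\{|f|^2\}$ of $\tVmk$; because both sides are linear in the element $|f|^2\in\tVmk$, the identity extends by linearity to all of $\tVmk$, making $E$ a linear sampling sequence for $\tVmk$, whence (\ref{eqn:ab:1})--(\ref{eqn:ab:4}) follow from the necessity half of Proposition~\ref{prop:local sampling}.

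The delicate points to watch are that $|\Wm|^2$ is only a cone and not a subspace---so passing a linear reconstruction formula back and forth between $|\Wm|^2$ and its linear span must be justified by the spanning identity of the first paragraph---and the exact bookkeeping of the knot multiplicities $m+m_l$ (together with the boundary cases $k=0,1$) in the dimension count. I expect the genuine spanning of $\tVmk$ by products of B-splines to be the hardest and most technical ingredient, while the reduction to Proposition~\ref{prop:local sampling} is then essentially bookkeeping.
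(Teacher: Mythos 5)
Your proposal is correct and follows essentially the same route as the paper: reduce to the real space via $|f_1+\sqrt{-1}f_2|^2=|f_1|^2+|f_2|^2$ (equivalently your polarization identity), identify $\Lspan(|\Vm|^2_{[t_0,t_k]})$ with the degree-$2m$ spline space $\tVmk$ of multiplicities $m+m_l$ by a containment-plus-dimension-count argument, and then invoke Proposition~\ref{prop:local sampling}. The core spanning/dimension step you single out is exactly the paper's Lemma~\ref{Lem:dimension}, which is proved by exhibiting an explicit linearly independent family $\calP_k$ of B-spline products and inducting on $k$, much as you sketch.
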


To prove this theorem, we need the following lemma, which itself is also interesting.

\begin{Lemma}\label{Lem:dimension}
Let $k\ge 1$ be an integer. Define
\begin{align}
\calP_k&=\{R_1Q_{i,l}:\,\supp Q_{i,l}\supset (t_0, t_1)\}  \label{eqn:F}\\
& \qquad \bigcup \{Q_{i,1}Q_{j,l}:\,0\le i\le k-1, Q_{j,l}\ne R_{i+1},  \supp Q_{j,l}\supset (t_i, t_{i+1}) \}
   \nonumber \\
& \qquad \bigcup \{R_{i+1}Q_{i,l}:\,1\le i\le k-1, 1\le l\le m_i \}.\nonumber
\end{align}
Then $\calP_k|_{[t_0,t_k]}$ is a basis for $\Lspan (|\Vm|^2_{[t_0, t_k]})$. Therefore,
\[
  \dim |\Vm|^2_{[t_0, t_k]}  = m(k+1)+1+\sum_{i=1}^{k-1} m_i = M_k.
\]
\end{Lemma}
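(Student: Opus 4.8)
The plan is to identify $\Lspan(|\Vm|^2_{[t_0,t_k]})$ with a span of products of B-splines, bound its dimension from above by a classical spline dimension count, and then exhibit $\calP_k$ as a linearly independent family of exactly the right cardinality, so that it is forced to be a basis. First I would reduce to products. Since the coefficients are real, $|f|^2=f^2$ for $f\in\Vm$, and the polarization identity $Q_{i,l}Q_{j,l'}=\tfrac14\bigl((Q_{i,l}+Q_{j,l'})^2-(Q_{i,l}-Q_{j,l'})^2\bigr)$ shows
\[
\Lspan(|\Vm|^2_{[t_0,t_k]})=\Lspan\{\,Q_{i,l}Q_{j,l'}|_{[t_0,t_k]}:\,(i,l),(j,l')\in I_{0,k}\,\}.
\]
Each such product is a polynomial of degree at most $2m$ on every $[t_n,t_{n+1}]$ and, being a product of two functions that are $C^{m-m_n}$ at $t_n$, is itself $C^{m-m_n}$ there. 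Hence $\Lspan(|\Vm|^2_{[t_0,t_k]})$ is contained in the spline space $S$ of degree $2m$ on $[t_0,t_k]$ with $C^{m-m_n}$ smoothness at each interior knot $t_n$. The standard dimension formula (see \cite{Sch}) gives $\dim S=(2m+1)+\sum_{n=1}^{k-1}\bigl(2m-(m-m_n)\bigr)=M_k$, so $\dim\Lspan(|\Vm|^2_{[t_0,t_k]})\le M_k$. Since every element of $\calP_k$ is a product of two B-splines whose indices lie in $I_{0,k}$, we also have $\calP_k\subset\Lspan(|\Vm|^2_{[t_0,t_k]})$; thus it suffices to prove that $\calP_k|_{[t_0,t_k]}$ has exactly $M_k$ elements and is linearly independent.

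The count I would verify by induction on $k$. For $k=1$ the defining sets contribute $R_1$ times the $m+1$ B-splines active on $(t_0,t_1)$ together with $Q_{0,1}$ times the remaining $m$ of them, i.e. $2m+1=M_1$ elements. Passing from $k-1$ to $k$ adds exactly the products in \eqref{eqn:F} with $i=k-1$, namely $m$ elements from the second set and $m_{k-1}$ from the third, which matches $M_k-M_{k-1}=m+m_{k-1}$; checking that the exclusion $Q_{j,l}\ne R_{i+1}$ prevents any product from being listed twice is a routine support bookkeeping.

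The heart of the argument, and the step I expect to be the main obstacle, is linear independence, which I would establish by a left-to-right sweep over the intervals $(t_n,t_{n+1})$. Suppose $\sum_{P\in\calP_k}\alpha_P P\equiv0$ on $[t_0,t_k]$. On a fixed $(t_n,t_{n+1})$ the only products of $\calP_k$ that do not vanish are those whose two B-spline factors are both active there; tracking supports shows these come only from the first two sets when $n=0$ and from the sets indexed by $i=n$ when $1\le n\le k-1$, while every product already annihilated on an earlier interval vanishes on $(t_n,t_{n+1})$. The decisive local fact is that on $(t_n,t_{n+1})$ one has $Q_{n,1}=c\,(x-t_n)^m$ and $R_{n+1}=c'\,(t_{n+1}-x)^m$, because a degree-$m$ B-spline with a simple knot at an end of its support vanishes there to order $m$. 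Writing the restricted relation as $Q_{n,1}A+R_{n+1}D\equiv0$, where $A$ lies in the span of the active B-splines other than $R_{n+1}$ and $D$ in the span of $\{Q_{n,l}:1\le l\le m_n\}$, coprimality of $(x-t_n)^m$ and $(t_{n+1}-x)^m$ forces $(t_{n+1}-x)^m\mid A$; since $\deg A\le m$ this makes $A$ a multiple of $R_{n+1}$, which is impossible unless $A=0$, whence $D=0$ as well. Linear independence of the active B-splines, and of the $Q_{n,l}$ (which have pairwise distinct vanishing orders at $t_n$), then kills all coefficients attached to interval $n$; the case $n=0$ is analogous, with $R_1$ multiplying the full span of the $m+1$ active B-splines.

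Sweeping $n=0,1,\dots,k-1$ annihilates every $\alpha_P$, proving that $\calP_k|_{[t_0,t_k]}$ is linearly independent. Combining the three steps yields $M_k=\#\calP_k\le\dim\Lspan(|\Vm|^2_{[t_0,t_k]})\le M_k$, so all inequalities are equalities, $\calP_k|_{[t_0,t_k]}$ is a basis, and $\dim|\Vm|^2_{[t_0,t_k]}=M_k$. I expect the only genuinely delicate points to be the precise identification of which products survive on each interval and the verification that the higher-multiplicity B-splines $Q_{n,l}$ interact correctly with the factorization through $(x-t_n)^m$ and $(t_{n+1}-x)^m$.
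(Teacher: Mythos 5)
Your proposal is correct and follows essentially the same route as the paper: polarization to reduce to products of B-splines, an upper bound on the dimension coming from the degree-$2m$, $C^{m-m_n}$ smoothness constraints (the paper computes this by an explicit rank count of the matching conditions at the knots, you cite the standard spline dimension formula --- same content), and linear independence of $\calP_k$ via the coprimality of $(x-t_n)^m$ and $(t_{n+1}-x)^m$ on each interval, which the paper organizes as an induction on $k$ and you as a left-to-right sweep. The only sentence that needs tightening is the claim that the products not vanishing on $(t_n,t_{n+1})$ come only from the sets indexed by $i=n$ --- products $Q_{i,1}Q_{j,l}$ with $i<n$ can survive there --- but, as you indicate, their coefficients have already been annihilated at interval $i$, so the reduced relation does take the form $Q_{n,1}A+R_{n+1}D=0$ and the argument goes through.
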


\begin{proof}
 We prove the conclusion  by induction over $k$.

First, we consider the case of $k=1$.
In this case, for any $f\in \Vm$, $|f|^2_{[t_0, t_1]}$ is a polynomial of degree no greater than $2m$. Hence
$ \dim |\Vm|^2_{[t_0, t_1]}  \le 2m+1$.

On the other hand, suppose that there exist sequences of constants $\{c_{i,l}\}$ and $\{d_{j,l}\}$ such that
\begin{align}
&  \sum c_{i,l} R_1(x) Q_{i,l}(x)
  + \sum  d_{j,l} Q_{0,1}(x)Q_{j,l}(x) = 0,\quad x\in [t_0,t_1], \label{eqn:dim:e1}
\end{align}
where $Q_{i,l}$ and $Q_{j,l}$ run over all B-splines whose supports contain the interval $(t_0, t_1)$
and  $Q_{j,l} \ne R_1$.

It follows that $R_1(x)$ is a factor of  $\sum  d_{j,l} Q_{0,1}(x)Q_{j,l}(x) $.
Observe that $R_1(x) = a (t_1-x)^m$
and  $Q_{0,1}(x) = a' (x-t_0)^m$ for $x\in (t_0,t_1)$, where $a$ and $a'$ are constants.
Hence $R_1(x)$ and $Q_{0,1}(x)$  have no common factor but constants.
Therefore, there is some constant $c'$ such that
\[
  c'R_1 (x) = \sum  d_{j,l}  Q_{j,l}(x),  \qquad x\in [t_0,t_1].
\]
Recall that $Q_{j,l}\ne R_1$ in the above equation.
Since the $m+1$ B-splines whose supports contain the interval $(t_0, t_1)$ are linearly independent,
we see from the above equation that
\[
  d_{j,l} = 0.
\]
Using the linear independence of $Q_{i,l}$ we get that  $c_{i,l}=0$.
Hence $\calP_1$ is linearly independent.

Observe that for any $(i,l)\ne (i',l')$,
\[
  Q_{i,l}(x) Q_{i',l'}(x) = \frac{1}{2} \Big( (Q_{i,l}(x)+ Q_{i',l'}(x))^2 -(Q_{i,l}(x) - Q_{i',l'}(x))^2\Big).
\]
We have $\calP_1|_{[t_0, t_1]} \subset \Lspan (|\Vm|^2_{[t_0, t_1]})$.
Hence $\dim |\Vm|^2_{[t_0, t_1]}\ge 2m+1$. Therefore,
\[
  \dim |\Vm|^2_{[t_0, t_1]} = 2m+1
\]
and $\calP_1|_{[t_0, t_1]}$ is a basis for $\Lspan (|\Vm|^2_{[t_0, t_1]})$.

Now assume that
$\calP_{k-1}|_{[t_0, t_{k-1}]}$ is a basis for $\Lspan (|\Vm|^2_{[t_0, t_{k-1}]})$ for some $k\ge 2$.
Let us show that
$\calP_k|_{[t_0, t_k]}$ is a basis for $\Lspan (|\Vm|^2_{[t_0, t_k]})$.

First, we show that $\calP_k|_{[t_0, t_k]}$
 is linearly independent on $[t_0, t_k]$.
Assume that there are constants $c_{i,l}$,
$d_{i,j,l}$ and $c'_{i,l}$ such that
\begin{eqnarray}
&&\sum_{(i,l):\, \supp Q_{i,l}\supset (t_0, t_1)}  c_{i,l}R_1(x)Q_{i,l}(x) \label{eqn:dim:e2}\\
&&\qquad    + \sum_{i=0}^{k-1} \sum_{\substack{(j,l):\,Q_{j,l}\ne R_{i+1} \\  \supp Q_{j,l}\supset (t_i, t_{i+1}) }}
   d_{i,j,l} Q_{i,1}(x) Q_{j,l}(x)  \nonumber \\
&&\qquad    + \sum_{i=1}^{k-1} \sum_{l=1}^{m_i} c'_{i,l} R_{i+1}(x) Q_{i,l}(x) = 0,
  \qquad  x\in [t_0, t_k].  \nonumber
\end{eqnarray}
Since $Q_{k-1,l}(x)=0$ for $x<t_{k-1}$, we have
\begin{eqnarray}
&&\sum_{(i,l):\, \supp Q_{i,l}\supset (t_0, t_1)}  c_{i,l}R_1(x)Q_{i,l}(x) \nonumber \\
&&\qquad    + \sum_{i=0}^{k-2} \sum_{\substack{(j,l):\,Q_{j,l}\ne R_{i+1} \\  \supp Q_{j,l}\supset (t_i, t_{i+1}) }}
   d_{i,j,l} Q_{i,1}(x) Q_{j,l}(x)  \nonumber \\
&&\qquad    + \sum_{i=1}^{k-2} \sum_{l=1}^{m_i} c'_{i,l} R_{i+1}(x) Q_{i,l}(x) = 0,
  \qquad  x\in [t_0, t_{k-1}]. \nonumber
\end{eqnarray}
By the inductive assumption,
$\calP_{k-1}|_{[t_0, t_{k-1}]}$ is a basis for $\Lspan (|\Vm|^2_{[t_0, t_{k-1}]})$.
Hence all   coefficients in the above equation are zeros.
It follows from (\ref{eqn:dim:e2}) that
for $x\in [t_{k-1}, t_k]$,
\begin{equation}\label{eqn:cd1}
   \sum_{\substack{(j,l):\, Q_{j,l}\ne R_k \\  \supp Q_{j,l}\supset (t_{k-1}, t_k) }}
   d_{k-1,j,l} Q_{k-1,1}(x) Q_{j,l}(x)
   +     \sum_{l=1}^{m_{k-1}} c'_{k-1,l} R_k(x) Q_{k-1,l}(x) = 0.
\end{equation}
Since $Q_{k-1,1}(x) = a (x-t_{k-1})^m$
and $R_k(x) = a'(t_k-x)^m$ on $(t_{k-1}, t_k)$,
$Q_{k-1,1}(x)$ and $R_k(x)$ have no common factor but constants.
Hence
there exists some constant  $c'$ such that
\begin{equation}\label{eqn:cd2}
  \sum_{\substack{(j,l):\, Q_{j,l}\ne R_k \\  \supp Q_{j,l}\supset (t_{k-1}, t_k) }}
   d_{k-1,j,l} Q_{j,l}(x)
= c' R_k(x), \qquad x\in [t_{k-1}, t_k].
\end{equation}
Again, using the linear independence of $Q_{j,l}$ and $R_{k+1}$, we see that
all coefficients in (\ref{eqn:cd2}) and therefore coefficients in (\ref{eqn:cd1}) are zeros.
Hence
$\calP_k|_{[t_0, t_k]}$
is linearly independent on $[t_0, t_k]$.
Therefore,
\[
  \dim |\Vm|^2_{[t_0, t_k]} \ge \# \calP_k = m(k+1)+1+\sum_{i=1}^k m_i = M_k.
\]

It remains to show that
$\dim |\Vm|^2_{[t_0, t_k]} \le M_k$.
Suppose that
\[
  |f(x)|^2 = \sum_{n=0}^{2m} c_{i,n} (x-t_i)^n,\qquad x\in [t_i,t_{i+1}], 0\le i\le k-1.
\]
Since $f$ has up to $(m-m_i)$-th derivative at $t_i$ for $1\le i\le k-1$, we have
\begin{equation} \label{eqn:e:c}
   \sum_{n=p}^{2m}  \frac{n!}{(n-p)!} c_{i-1,n} (t_i-t_{i-1})^{n-p}  =   p! c_{i,p}, \qquad  0\le p\le m-m_i,    1\le i\le k-1.
\end{equation}

Define the matrix $A$ by
\[
  A  = \begin{pmatrix}
     A_{1}   & B_{1}   & 0         &  \ldots  & 0  & 0    \\
         0     & A_{2} & B_{2} &  \ldots  & 0  & 0    \\
         0     & 0         & A_{3} &  \ldots  & 0  & 0    \\
               & \ldots    &           &  \ldots  &    &      \\
               &           &           &  \ldots  & A_{k-1}  & B_{k-1}
  \end{pmatrix},
\]
where
\[
  A_i  = \begin{pmatrix}
      1    & (t_i-t_{i-1})  & (t_i-t_{i-1})^2  &  \ldots       & (t_i-t_{i-1})^{2m}    \\
      0    & 1              & 2(t_i-t_{i-1})  &  \ldots       & 2m (t_i-t_{i-1})^{2m-1}  \\
      0    & 0              & 2  &  \ldots        & 2m(2m-1) (t_i-t_{i-1})^{2m-2}  \\
           & \ldots   &          &  \ldots        &              \\
      0    &  0       & 0        &  \ldots      & \frac{(2m)!}{(m+m_i)!}(t_i-t_{i-1})^{m+m_i}
  \end{pmatrix}
\]
and
\[
  B_i  = \begin{pmatrix}
      -1    & 0   & 0  &  \ldots  &     & 0    \\
      0    & -1  & 0  &  \ldots  &     & 0   \\
      0    & 0  & -2  &  \ldots  &     & 0   \\
           & \ldots   &          &  \ldots  &     &              \\
      0    &  0       & 0        &  \ldots  & -(m-m_i)! \,\,\, \ldots   & 0
  \end{pmatrix}
\]
are  $(m-m_i+1)\times (2m+1)$  matrices.
Set
\[
  c = (c_{0,0},\ldots, c_{0,2m}, \ldots, c_{k-1,0}, \ldots, c_{k-1,2m} )^t.
\]
We see from (\ref{eqn:e:c}) that
\[
  Ac = 0.
\]
Observe that $\rank(A) = \sum_{i=1}^{k-1} (m-m_i+1)$.
The dimension of the solution set to the above equation is
$(k+1)m + 1+\sum_{i=1}^{k-1} m_i = M_k$.
Hence $\dim |\Vm|^2_{[t_0, t_k]} \le M_k$.
This completes the proof.
\end{proof}

\

\begin{proof}[Proof of Theorem~\ref{thm:main:2}]
For any $f\in \Wm$, there exist $f_1, f_2\in \Vm$ such that
$f(x) =  f_1(x) + \sqrt{-1} f_2(x)$. Hence $|f(x)|^2 = |f_1(x)|^2 + |f_2(x)|^2$.
It follows that $E$ is a linear sampling sequence for $|\Wm|^2_{[t_0, t_k]}$
if and only if it is for $|\Vm|^2_{[t_0, t_k]}$.
In other words, we only need to show that
Theorem~\ref{thm:main:2} is true when $\Wm$ is replaced by $\Vm$.

Let $\tVm$ be the real linear space contains all $2m$-degree B-splines with the same knot sequence $\{t_i\}$ as that of B-splines in $\Vm$
but different knot multiplicity sequence $\{\tilde m_i = m+m_i\}$.

Fix some $k\ge 1$ and $f\in \Vm$. Then $|f|^2$ is $0$ or a $2m$-degree polynomial on every interval $(t_i, t_{i+1})$.
And at every knot $t_i$, $|f|^2$ has up to $(m-m_i)$-th derivative. Hence $|f|^2 \in \tVm$.
Therefore
$|\Vm|^2_{[t_0, t_k]}
\subset \tVm |_{[t_0, t_k]}$.
On the other hand, we see from Lemma~\ref{Lem:spline:basis} and Lemma~\ref{Lem:dimension} that
$\dim |\Vm|^2_{[t_0, t_k]} = M_k =\dim \tVm |_{[t_0, t_k]}$.
Hence
\[
  \Lspan (|\Vm|^2_{[t_0, t_k]}) =  \tVm |_{[t_0, t_k]}.
\]
Now the conclusion follows from Proposition~\ref{prop:local sampling}.
\end{proof}

By setting $t_i=i$ and $m_i=1$, we get Theorem~\ref{thm:linear}.

\subsection{Reconstruction Formula }
Let $E=\{x_n:\, 1\le n\le N\}\subset [t_0, t_k]$ be a sequence which meets (\ref{eqn:ab:1})--(\ref{eqn:ab:4}). We see from Theorem~\ref{thm:main:2}
that there exist functions $S_n\in \Lspan(|\Vm|^2_{[t_0, t_k]})$ such that
\[
  |f(x)|^2 = \sum_{n=1}^N |f(x_n)|^2 S_n(x),\qquad \forall f\in\Wm, \, x\in [t_0, t_n].
\]
In this subsection, we study how to find the functions $S_n$.

Let $\calP_k$ be defined by (\ref{eqn:F}).
Denote functions in $\calP_k$ by $h_i$, $1\le i\le M_k$.
For any $f\in\Vm$, since $\{h_i:\, 1\le i\le M_k\}$ is a basis for
$\Lspan(|\Vm|^2_{[t_0, t_k]})$, there is some $c\in \bbR^{M_k}$ such that
\begin{equation}\label{eqn:fx}
  |f(x)|^2 = \sum_{i=1}^{M_k} c_i h_i(x).
\end{equation}
Hence
\begin{equation}\label{eqn:fxn}
   |f(x_n)|^2 = \sum_{i=1}^{M_k} c_i h_i(x_n),\qquad 1\le n\le N.
\end{equation}
Denote by $\Phi$ the $N\times M_k$ matrix $(h_i(x_n))_{1\le n\le N, 1\le i\le M_k}$.
Set $F = (|f(x_1)|^2$, $\ldots$, $|f(x_N)|^2)^t$. Then  we have
\[
  F = \Phi c.
\]

Since $E$ is a linear sampling sequence for $\tVmk = \Lspan(|\Vm|^2_{[t_0, t_k]})$,
the above equation determines $c$ uniquely. Hence the rank of $\Phi$ is $M_k$.  Consequently,
$c = (\Phi^t\Phi)^{-1} \Phi^t F$.  Set
\[
  (S_1,\ldots, S_n)^t  =  \Phi (\Phi^t\Phi)^{-1} (h_1,\ldots, h_{M_k})^t.
\]
We see from (\ref{eqn:fx}) and (\ref{eqn:fxn}) that
\[
  |f(x)|^2 = \sum_{n=1}^N  |f(x_n)|^2 S_n(x),\qquad x\in [t_0, t_k].
\]

%\bibliographystyle{elsarticle-num}
%\bibliography {sampling15}

\end{document}